\newcommand*{\dd}{\mathop{}\!\mathrm{d}}
\newcommand\R{\mathbb{R}}
\newcommand\N{\mathbb{N}}
\newcommand\fctset[3]{{#1}: {#2} \rightarrow {#3}}
\newcommand\soe{\geqslant}
\newcommand\ioe{\leqslant}
\newcommand{\fsL}{\textnormal{L}} 
\newcommand{\fsW}{\textnormal{W}} 
\newcommand{\Lip}{\mathrm{Lip}} 
\newcommand{\init}{\mathrm{in}} 
\newcommand\Ccal{\ensuremath{\mathcal C}}
\renewcommand\a{\alpha}
\renewcommand\phi{\varphi}
\newcommand\eps{\varepsilon}
\newcommand\indic[1]{\mathds{1}_{#1}}
\newtheorem{theo}{Theorem}
\newtheorem{lemm}{Lemma}
\newtheorem{prop}{Proposition}
\newtheorem{rema}{Remark}
\newtheorem*{theo*}{Theorem}
\newtheorem*{coro*}{Corollary}
\newtheorem*{lemm*}{Lemma}
\newtheorem*{conj*}{Conjecture}
\newtheorem*{prop*}{Proposition}
\newtheorem*{defi*}{Definition}
\newtheorem*{rema*}{Remark}
\newcommand{\B}{\textnormal{B}}
\DeclareMathOperator{\oscillation}{osc}
\author{Hector Bouton
  \thanks{Email: \href{bouton@imj-prg.fr}
    {\texttt{bouton@imj-prg.fr}}\\
  Université Paris Cité and Sorbonne Université, CNRS, IMJ-PRG, F-75013 Paris, France}
  \and
  Laurent Desvillettes
  \thanks{Email: \href{mailto:desvillettes@imj-prg.fr}
    {\texttt{desvillettes@imj-prg.fr}}
    \\
    Université Paris Cité and Sorbonne Université, CNRS and IUF, IMJ-PRG, F-75013 Paris, France.}
   \and
  Helge Dietert
   \thanks{Email: \href{mailto:helge.dietert@imj-prg.fr}
    {\texttt{helge.dietert@imj-prg.fr}} \\
    Université Paris Cité and Sorbonne Université, CNRS, IMJ-PRG, F-75013 Paris, France.}
}
\title{Hölder regularity of parabolic equations with Dirichlet boundary conditions  and application to reaction-diffusion and reaction-cross-diffusion systems}
\begin{document}
\maketitle
\begin{abstract}
  In this work, we adapt our recent article \cite{boutondesvillettes2025} to the
  setting of Dirichlet boundary conditions.  A key part is the study of the
  parabolic equation \(a\partial_t w - \Delta w = f\) with a rough coefficient \(a\),
  homogeneous Dirichlet boundary conditions, and the special assumption
  \(\partial_tw \ge 0\).  We then apply it to prove existence of global strong solutions
  to the triangular Shigesada-Kawasaki-Teramoto (SKT) cross-diffusion system
  with Lotka-Volterra reaction terms in three dimensions and Dirichlet boundary
  conditions, and to obtain estimates for solutions to reaction-diffusion
  systems modeling reversible chemistry (still when Dirichlet boundary
  conditions are considered).
\end{abstract}

\section{Introduction}

In this paper, we cover parabolic equations of the form
\begin{equation}
  \label{eq:rough-scalar-pde}
  a(t,x) \partial_t w(t,x) - \Delta w(t,x) = f(t,x)
  \qquad \text{and}  \quad \partial_t w \ge 0,
\end{equation}
for the unknown \(w = w(t,x)\) on $[0, T]\times\Omega$, where $\Omega$ is a bounded $C^2$
domain of $\R^d$.  Here $a:=a(t,x)$ is a non-constant coefficient, and
$f:=f(t,x)$ is a source term.

In \cite{boutondesvillettes2025}, we proved under the ellipticity condition on
$a$, that is
\begin{equation}
  \label{eq:assumption-a}
  0 < a_0 \le a \le c_0 a_0 < \infty,
\end{equation}
for some constants \(a_0\) and \(c_0 \ge 1\), and with homogeneous Neumann boundary
data
\begin{equation} \label{neu}
  \vec n \cdot \nabla_x w = 0 \qquad {\hbox{ on }} \qquad  (0,T] \times \partial\Omega,
\end{equation}
the Hölder regularity of \(w\) provided that \(f \in \fsL^{p}((0,T]; \fsL^q(\Omega))\)
with $\gamma := 2 - \frac2p - \frac{d}q > 0$.

The obtained estimate can be seen as a special case of the results proven in
1981 by Krylov and Safonov \cite{krylov1981}.  The extra assumption
$\partial_t w \soe 0$ enables to write down a direct proof in which one systematically uses comparison  with solutions to the
standard heat equation.  It also enables  a complete treatment of boundary conditions
with explicit constants, and, moreover, it requires only a control of \(f\) in the critical
Lebesgue space.
\medskip

In this work, we adopt this strategy to the case of Dirichlet boundary
conditions. More precisely, we provide the

\begin{theo}\label{thm:thm-dirichlet-bc}
  We consider a bounded, $\Ccal^2$ domain \(\Omega \subset \R^d\), where $d \in \N - \{0\}$.  Set \(T>0\), and
  \(p, q \in [1, \infty]\) such that $\gamma := 2 - \frac2p - \frac{d}q > 0$.  Then there
  exists a constant \(\alpha >0\) only depending on \(\gamma, d,c_0\), and a constant
  \(C_*\) only depending on \(p,q,d, \Omega, T, a_0,c_0\) such that for any
  $\Ccal^1(\overline\Omega)$ initial data $w_\init$ satisfying also the homogeneous Dirichlet
  boundary condition $w_\init(x) = 0$ for $x\in\partial\Omega$, forcing data
  \(f \in \fsL^{p}((0,T]; \fsL^q(\Omega))\) and a coefficient \(a := a(t,x)\)
  satisfying the bound \eqref{eq:assumption-a}, a solution \(w \ge 0\) of
  \eqref{eq:rough-scalar-pde} over \((0,T] \times \Omega\) with homogeneous Dirichlet
  boundary data ($w = 0$ on $ (0,T] \times \partial\Omega$) lies in
  \(\Ccal^{0,\alpha}([0,T] \times {\overline{\Omega}} ) \).  Moreover, the following estimate
  holds:
  \begin{equation*}
    \| w \|_{\Ccal^{0,\alpha}([0,T] \times {\overline{\Omega}} ) }
    \le C_*
    \left(
      \| f \|_{\fsL^p((0,T]; \fsL^{q}(\Omega))}
      + \| w_{\init} \|_{\Ccal^1(\overline\Omega)}
    \right).
  \end{equation*}
\end{theo}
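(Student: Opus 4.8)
The plan is to mimic the Neumann-case argument of \cite{boutondesvillettes2025}, systematically comparing $w$ with solutions of the plain heat equation, and to split the Hölder estimate into an interior part and a boundary part. First I would reduce to homogeneous data for the comparison: since $w_\init \in \Ccal^1(\overline\Omega)$ vanishes on $\partial\Omega$, let $W$ solve the heat equation $\partial_t W - \Delta W = 0$ with $W(0,\cdot) = w_\init$ and homogeneous Dirichlet data; by standard parabolic theory $W$ is smooth for $t>0$ and bounded in $\Ccal^{0,\alpha}$ by $\|w_\init\|_{\Ccal^1}$ up to $t=0$. The difference $v := w - W$ then satisfies $a\partial_t v - \Delta v = f - (a-1)\partial_t W$ with zero initial and boundary data; the extra term $(a-1)\partial_t W$ is controlled in $L^p L^q$ using the ellipticity bound \eqref{eq:assumption-a} and smoothing estimates for $W$, so it suffices to treat the case $w_\init = 0$ with a modified (still critically integrable) right-hand side.

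Next, for the interior estimate, I would use the key special feature $\partial_t w \ge 0$: on a parabolic ball $Q_r$ compactly contained in the domain, write $w = h + g$ where $h$ solves the heat equation with the same parabolic boundary data on $\partial_p Q_r$ and $g$ carries the forcing. The sign condition $\partial_t w \ge 0$ together with $0 < a_0 \le a$ lets one bound $\partial_t w$ pointwise in terms of $\Delta w + f$ and hence control the $L^1$-type oscillation of $w$ by comparison: $a\,\partial_t w = \Delta w + f$ with $\partial_t w \ge 0$ forces $\Delta w \ge -f$, giving a one-sided heat-equation comparison. Iterating this on dyadically shrinking parabolic cylinders — the standard oscillation-decay / De Giorgi–Moser iteration scheme, but here made explicit because the comparison is with the constant-coefficient heat kernel — yields $\oscillation_{Q_r} w \le C r^\alpha(\|w\|_\infty + \|f\|_{L^pL^q})$ for $\alpha$ depending only on $\gamma, d, c_0$. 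The $L^\infty$ bound on $w$ itself I would get beforehand from the maximum principle / Aronson-type bounds for $a\partial_t - \Delta$ with Dirichlet data, again using only \eqref{eq:assumption-a} and the critical integrability $\gamma > 0$.

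For the boundary estimate — which I expect to be the main obstacle — I would flatten $\partial\Omega$ locally using the $\Ccal^2$ regularity of the domain, turning the problem into one on a half-ball with $w = 0$ on the flat part; the $\Ccal^2$ change of variables perturbs $-\Delta$ into a variable-coefficient elliptic operator $-\nabla\cdot(A\nabla\cdot) + b\cdot\nabla$ but, crucially, leaves the structure $a\partial_t w - \mathcal L w = f$ with $\partial_t w \ge 0$ intact, and $A$ is Hölder-close to the identity. Because $w$ vanishes on the flat boundary, one can reflect oddly (for the leading constant-coefficient part) and compare with the heat equation on the full ball, absorbing the lower-order and the $\Ccal^{0,1}$-in-$x$ perturbation of $A$ as additional forcing of critical integrability; the odd reflection is compatible with $\partial_t w \ge 0$. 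This reduces the boundary oscillation decay to the interior scheme already established, giving $\oscillation_{Q_r \cap \overline\Omega} w \le C r^\alpha(\cdots)$ uniformly for boundary-centred cylinders. The delicate points here are (i) keeping every constant explicit and dependent only on the listed quantities while handling the geometry of $\partial\Omega$, and (ii) controlling the time-regularity up to $t=0$ near the boundary, where the $\Ccal^1$ initial data and the homogeneous Dirichlet condition must be shown compatible enough to avoid an initial-layer loss — this is exactly where the reduction in the first paragraph (subtracting $W$) pays off. Finally, a standard covering argument patches the interior and boundary oscillation-decay estimates into the global $\Ccal^{0,\alpha}([0,T]\times\overline\Omega)$ bound with the stated constant $C_*$.
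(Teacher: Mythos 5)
Your interior oscillation-decay sketch is essentially the mechanism the paper uses, but the two reductions that frame it both break the structural hypothesis on which the whole method rests. First, subtracting the caloric extension $W$ of $w_\init$ and working with $v := w - W$ destroys both sign conditions: $v$ is in general neither nonnegative nor nondecreasing in time, since $\partial_t W$ has no sign. The one-sided comparison you invoke right afterwards (freezing the rough coefficient via $(a_0c_0\partial_t-\Delta)(w-v)=(a_0c_0-a)\partial_t w\ge 0$, or in your phrasing ``$\Delta w \ge -f$'') is exactly what $\partial_t w\ge 0$ buys, and it is not available for $v$. The paper never changes the unknown: it keeps $w$ throughout and handles the initial layer by the sandwich $v_{a_0c_0}\le w\le v_{a_0}$, where $v_c$ solves the constant-coefficient problem with the same data (this again uses $\partial_t w\ge 0$), combined with a Schauder bound for the heat part, yielding $|w(t,x)-w(0,x)|\lesssim t^{\min(1,\gamma)/2}$.

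Second, and more seriously, the boundary step fails. The odd reflection across the flattened boundary is \emph{not} compatible with the hypotheses: if $\tilde w(t,x',-x_d):=-w(t,x',x_d)$, then on the reflected side $\tilde w\le 0$ and $\partial_t\tilde w\le 0$, so both $w\ge 0$ and $\partial_t w\ge 0$ are lost, and the interior scheme cannot be applied to the reflected function. In addition, flattening turns $-\Delta$ into $-\nabla\cdot(A\nabla\cdot)+b\cdot\nabla$ with $A\neq I$, and you propose to absorb the perturbation as critically integrable forcing; this requires a priori control of $\nabla w$ (indeed of $\nabla\cdot((A-I)\nabla w)$) in Lebesgue spaces, which is not available for an equation whose coefficient $a$ is merely measurable --- it is essentially the regularity one is trying to prove. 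The paper deliberately avoids any boundary oscillation decay: the reduction of oscillations (Proposition~\ref{thm:reduction-oscillation-interior}) is proved only for balls contained in $\Omega$, and the boundary is handled by showing that the solution itself decays, $w(t,x)\lesssim(\|w_\init\|_{\Lip(\overline\Omega)}+\|f\|_{\fsL^p([0,T],\fsL^q(\Omega))})\,d_x^{\tilde\gamma}$ (Lemma~\ref{thm:boundary-bound}), obtained from $w\le v_{a_0}$ and the Gaussian upper bound $\Gamma_{\Omega,D}(t,x,y)\lesssim d_xd_y\,t^{-(d+2)/2}e^{-c|x-y|^2/t}$; since $w\ge 0$ and $w=0$ on $\partial\Omega$, this pointwise decay already yields the Hölder estimate whenever $\min(d_x,d_y)\lesssim|x-y|$, and the final argument is a three-case matching (near the boundary, small times, and interior iteration started at scale $R=\min(t^{1/2},d_x,d_y)$ with the initial oscillation controlled by the two lemmas). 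As written, your proposal does not close at either the initial time or the boundary.
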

\begin{rema}
  Similarly as in \cite{boutondesvillettes2025}, we can improve the bound to the more precise estimate
  \begin{multline*}
    \| w \|_{\Ccal^{0,\alpha}([0,T] \times {\overline{\Omega}} ) }
    \le C_*
    \left(
      \| f_+ \|_{\fsL^p((0,T]; \fsL^{q}(\Omega))}
      + \| w_{\init} \|_{\Ccal^1(\overline\Omega)}
    \right)^{1-\alpha/\gamma}\\
    \left(
      \| f\|_{\fsL^p((0,T]; \fsL^{q}(\Omega))}
      + \| w_{\init} \|_{\Ccal^1(\overline\Omega)}
    \right)^{\alpha/\gamma}.
  \end{multline*}
  We can also explicitly quantify $\a$ and $C_*$.
\end{rema}

\begin{rema}
  The assumption $w_\init\in\Ccal^1(\overline\Omega)$ is not optimal but leads to simplified proofs, it can be replaced by $w_\init\in\Ccal^{0,\beta}(\overline\Omega)$ for any $0 < \beta < 1$.
\end{rema}

The proof in \cite{boutondesvillettes2025} is based on a reduction of
oscillations in the sets $\B(x, r)\cap\Omega$, for $r>0$, $x\in\Omega$. In the setting of
homogeneous Dirichlet boundary conditions, such a reduction of oscillations only
holds when $B(x, r)\subset \Omega$. However, we are able to recover the Hölder regularity
of the solution in the whole domain, since the solution converges to $0$ near
the boundary.\medskip

As an application, we consider a classical model coming from reversible
chemistry. Defining for $i = 1, \dots, 4$ the concentrations
$u_i := u_i(t, x) \soe 0$ of species $A_i$ diffusing with rate $d_i$, and taking
into account the reversible reaction $A_1 + A_3 \rightleftharpoons A_2 + A_4$, we end up
(assuming that the rates of reaction is $1$) with the system
\begin{equation}\label{eq:chemistry}
  \partial_t u_i - d_i\Delta u_i = (-1)^i(u_1\,u_3 - u_2\,u_4).
\end{equation}
In \cite{boutondesvillettes2025}, using the H\"older estimate for the solution
to \eqref{eq:rough-scalar-pde}, \eqref{neu}, we provided a new simple proof of existence of
strong global solutions to this system when it is complemented with homogeneous
Neumann boundary conditions and the dimension is $d \le 4$.

Considering now the Dirichlet boundary condition
\begin{equation}\label{eq:boundary-chemistry}
  u_i(t,x) = 0\quad \text{ for } (t,x) \in \R_+ \times \partial\Omega,
\end{equation}
we obtain the following result:

\begin{prop} \label{prord} Let $d \le 4$ and $\Omega \subset \R^d$ be a $\Ccal^2$ bounded
  domain.  Let $d_i >0$ be constant diffusion rates for $i = 1, \dots, 4$, and
  suppose that the initial data $u_i^\init \ge 0$ lie in $\fsW^{2,\infty}(\Omega)$ and are
  compatible with the homogeneous Dirichlet boundary condition
  \eqref{eq:boundary-chemistry}.

  Then there exists a strong (that is, all terms appearing in the equation are
  defined a.e.) solution to system
  \eqref{eq:chemistry}--\eqref{eq:boundary-chemistry} with initial data
  $u_i^\init$.
\end{prop}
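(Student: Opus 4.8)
The proof adapts the strategy of \cite{boutondesvillettes2025}, replacing the Hölder estimate for Neumann data by \Cref{thm:thm-dirichlet-bc}; fix an arbitrary $T>0$ (all a priori bounds below are controlled by the initial data alone, so a solution on $[0,T]$ for every $T$ yields a global one). \emph{Step 1 (approximation and first estimates).} First I would replace the reaction term by a bounded smooth truncation, e.g. $R^\eps := \frac{u_1u_3}{1+\eps u_1u_3} - \frac{u_2u_4}{1+\eps u_2u_4}$, chosen so as to preserve quasi-positivity (the rate of species $i$ stays $\ge 0$ when $u_i=0$ and the others are $\ge 0$). Classical parabolic theory then gives, for each $\eps>0$, a global classical solution $u^\eps=(u_1^\eps,\dots,u_4^\eps)\ge 0$ with data $u_i^\init$ and homogeneous Dirichlet conditions. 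Summing the equations over one odd and one even index $i,j$, the reaction terms cancel, $\partial_t(u_i^\eps+u_j^\eps)=\Delta(d_iu_i^\eps+d_ju_j^\eps)$; integrating over $\Omega$ and using $u_i^\eps\ge 0 = u_i^\eps|_{\partial\Omega}$ (hence $\partial_n u_i^\eps\le 0$ there) shows $\int_\Omega(u_i^\eps+u_j^\eps)$ is non-increasing, giving a uniform $\fsL^\infty((0,T);\fsL^1(\Omega))$ bound on each $u_i^\eps$. A Pierre-type duality argument, adapted to Dirichlet data through the backward heat semigroup, upgrades this to a uniform $\fsL^2((0,T)\times\Omega)$ bound, hence an $\fsL^1((0,T)\times\Omega)$ bound on $R^\eps$.

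\emph{Step 2 (Hölder bound for time-integrated quantities).} Set $W_k^\eps(t,x):=\int_0^t u_k^\eps(s,x)\dd s$ and, for a fixed odd--even pair $(i,j)$, $w^\eps:= d_iW_i^\eps+d_jW_j^\eps\ge 0$. Integrating the conservation law in time gives $\Delta w^\eps=(u_i^\eps+u_j^\eps)-(u_i^\init+u_j^\init)$, while $\partial_t(W_i^\eps+W_j^\eps)=u_i^\eps+u_j^\eps$. Putting $a^\eps:=(u_i^\eps+u_j^\eps)/(d_iu_i^\eps+d_ju_j^\eps)$ (any value in $[1/\max(d_i,d_j),1/\min(d_i,d_j)]$ where the denominator vanishes), one obtains
\[
  a^\eps\,\partial_t w^\eps - \Delta w^\eps = u_i^\init + u_j^\init, \qquad \partial_t w^\eps = d_iu_i^\eps + d_ju_j^\eps \ge 0,
\]
with $w^\eps=0$ on $(0,T]\times\partial\Omega$ and $w^\eps(0,\cdot)=0$. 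Since $a^\eps$ satisfies \eqref{eq:assumption-a} with constants depending only on the $d_k$, and $u_i^\init+u_j^\init\in\fsW^{2,\infty}(\Omega)\subset\fsL^\infty$ (so $\gamma=2>0$), \Cref{thm:thm-dirichlet-bc} yields a bound on $w^\eps$ in $\Ccal^{0,\alpha}([0,T]\times\overline\Omega)$ uniform in $\eps$; in particular $\int_0^t(d_iu_i^\eps+d_ju_j^\eps)\dd s$ is uniformly bounded on $[0,T]\times\overline\Omega$, i.e. each $u_k^\eps$ is bounded in $\fsL^\infty_x\fsL^1_t$.

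\emph{Step 3 (bootstrap, compactness, passage to the limit).} Combining the $\fsL^\infty_t\fsL^1_x$ mass bound, the $\fsL^2$ bound, and the new $\fsL^\infty_x\fsL^1_t$ control, together with interior and up-to-the-boundary parabolic regularity applied to each $u_k^\eps$ (viewed as solving a heat equation with forcing $\pm R^\eps$), one bootstraps the integrability of the $u_k^\eps$: the restriction $d\le 4$ is what makes the two-derivative gain of the heat equation dominate the quadratic loss in $R^\eps$ once the integrability provided by Step 2 is reached, so that $u_k^\eps$ is bounded in $\fsL^p((0,T)\times\Omega)$ for some $p>2$ and $R^\eps$ in $\fsL^{p/2}$ with $p/2>1$. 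Then $\{u_1^\eps u_3^\eps\}$ and $\{u_2^\eps u_4^\eps\}$ are equi-integrable, and an Aubin--Lions argument (with $\partial_t u_k^\eps$ bounded in a negative-order space via the equation) promotes weak to strong $\fsL^1$ convergence of a subsequence $u_k^\eps\to u_k\ge 0$, with $R^\eps\to u_1u_3-u_2u_4$ in $\fsL^1$, while $w^\eps\to w$ uniformly by Arzelà--Ascoli. Passing to the limit in the weak formulation, and using the $\fsL^{p/2}$ bound with parabolic Calderón--Zygmund estimates to get $\partial_t u_k,\Delta u_k\in\fsL^1_{\mathrm{loc}}$, shows that $(u_1,\dots,u_4)$ is a strong solution of \eqref{eq:chemistry}--\eqref{eq:boundary-chemistry} with data $u_i^\init$; as $T$ was arbitrary, it is global.

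\emph{Main obstacle.} The delicate point is the compactness of the quadratic reaction terms $u_1^\eps u_3^\eps$ and $u_2^\eps u_4^\eps$ — the classical difficulty for quadratic reaction--diffusion systems — which is resolved by feeding the uniform Hölder bound on the time-integrated quantities of Step 2 into the parabolic bootstrap of Step 3, and this is exactly where the dimensional restriction $d\le 4$ enters.
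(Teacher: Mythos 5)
Your Steps 1--2 are essentially the paper's argument: the paper applies Theorem~\ref{thm:thm-dirichlet-bc} to $w:=\int_0^t\sum_{i=1}^4 d_iu_i$ with $a=\big(\sum_i u_i\big)/\big(\sum_i d_iu_i\big)$, source $\sum_i u_i^\init$ and $\gamma=2$; your pairwise (odd--even) version of this is equivalent and correct, including the verification that $\partial_t w\ge 0$, $w=0$ on the boundary and that $a$ satisfies \eqref{eq:assumption-a}.

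The genuine gap is Step 3. From the Hölder bound on $w^\eps$ you only retain its $\fsL^\infty$ consequence ($u_k^\eps$ bounded in $\fsL^\infty_x\fsL^1_t$) and then assert that, together with the duality $\fsL^2$ bound, a heat-kernel/maximal-regularity bootstrap reaches some $\fsL^p$ with $p>2$ because $d\le 4$. This is not substantiated, and on scaling grounds it fails in the relevant dimensions: for the quadratic source $R^\eps=O(u^2)$, a Lebesgue bootstrap $u\in\fsL^p\Rightarrow u^2\in\fsL^{p/2}\Rightarrow u\in\fsL^{q}$ with $\frac1q=\frac2p-\frac{2}{d+2}$ improves integrability only if $p>\frac{d+2}{2}$, i.e.\ $p>2.5$ for $d=3$ and $p>3$ for $d=4$, whereas you start at $p=2$; the extra $\fsL^\infty_x\fsL^1_t$ information does not obviously repair this (it gives no pointwise-in-time control of the other factor in $u_1^\eps u_3^\eps$). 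The paper closes the argument by a different mechanism, which is absent from your proposal: the energy estimate \eqref{u14p} with $p=1$ (giving $\nabla u_i\in\fsL^2_{t,x}$ with $\int u_i^3$ on the right-hand side) is combined with the one-sided interpolation inequality \eqref{esalp2}, which uses the full H\"older norm $\|w\|_{\Ccal^{0,\alpha}}$ --- not merely $\|w\|_{\fsL^\infty}$ --- to control $\|\Delta w\|_{\fsL^{2\frac{3-\alpha}{2-\alpha}}}$, i.e.\ $u_i$ in $\fsL^{3+\delta}$, strictly above the critical exponent $3$ for $d=4$; the exponent $3\frac{2-\alpha}{3-\alpha}<2$ on the gradient then lets one absorb the cubic term in \eqref{u14p} and close the estimate. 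With $\alpha=0$ (i.e.\ using only boundedness of $w$, as you do) the interpolation degenerates to the borderline $\fsL^3$ bound with a non-small constant and the argument does not close; this is exactly where the H\"older modulus produced by Theorem~\ref{thm:thm-dirichlet-bc}, and the restriction $d\le 4$ through the interpolation/Sobolev step, are really used. To repair your proof, replace the bootstrap of Step 3 by the derivation of \eqref{u14p} and the application of \eqref{esalp2} (keeping your compactness and limit passage, which is fine once $u_k^\eps$ is uniformly bounded in $\fsL^{3+\delta}$ and hence in every $\fsL^p$, $p<\infty$).
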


Note that existence of strong solutions to \eqref{eq:chemistry} with Neumann boundary
conditions (or in the whole space) in all dimension $d$ was obtained by different
methods in \cite{caputogoudon2019, fellner2020, souplet2018}, and that the proofs
can be adapted rather simply to the Dirichlet case.

\begin{rema}
  The techniques that we use in the proof of \cref{prord} apply also verbatim
  to the more general framework of mass-dissipative reaction-diffusion equations
  with quadratic non-linearity, we refer to \cite{boutondesvillettes2025} and
  references therein for the definition and treatment of such systems (with homogeneous Neumann boundary conditions).
\end{rema}

We also study the triangular Shigesada-Kawasaki-Teramoto (SKT) system,
describing two interacting biological species with number density
$u:=u(t,x), v:=v(t,x) \soe 0$. The competition is supposed to be of
Lotka-Volterra type, and the diffusion rate of the species $u$ increases when
the concentration of the species $v$ increases. More precisely, the model writes
\begin{equation}\label{eq:SKT}
  \left\{
  \begin{aligned}
    \partial_t u - \Delta[ (d_1 +\sigma v) u ] = f_u (u,v), \\
    \partial_t v - d_2\,\Delta v = f_v (u,v), \\
  \end{aligned}
  \right.
\end{equation}
where 
\begin{align*}
  f_u(u,v) &= u\,( r_u - d_{11}\, u - d_{12}\, v), \\
  f_v(u,v) &= v\,(r_v - d_{21}\, u - d_{22}\, v),
\end{align*}
and $d_1,d_2 ,\sigma, d_{ij} >0$ (for $i,j=1,2$), $r_u,r_v\ge 0$,  are constants.
\medskip

In \cite{boutondesvillettes2025}, still using the H\"older estimate for the
solution to \eqref{eq:rough-scalar-pde} with homogeneous Neumann boundary
condition \eqref{neu}, we proved existence of strong global solutions to this
system in dimension $d \le 4$ when it is complemented with homogeneous Neumann
boundary conditions.

We also can use Theorem~\ref{thm:thm-dirichlet-bc} to prove the existence of
strong solutions to system \eqref{eq:SKT} with homogeneous Dirichlet boundary
condition
\begin{equation}\label{eq:bc-SKT}
  u(t, x) = 0 \quad \text{ and }\quad v(t, x) = 0  \quad  \text{ for } (t,x) \in [0, T] \times \partial\Omega .
\end{equation}

We get the 

\begin{prop}\label{thm:SKT-dir}
  Let $d\ioe 4$, $\Omega \subset \R^d$ be a $\Ccal^2$ bounded domain, and consider $d_1,d_2,\sigma >0$,
  $r_u,r_v\ge 0$, $d_{ij} > 0$ for $i,j=1,2$.  We suppose that
  $u^\init, v^\init \ge 0$ are initial data which lie in $\fsW^{2,\infty}(\Omega)$ and are
  compatible with the homogeneous Dirichlet boundary condition \eqref{eq:bc-SKT}.

  Then, there exists a strong (that is, all terms appearing in the equation are
  defined a.e.) solution to system \eqref{eq:SKT}--\eqref{eq:bc-SKT} with initial
  data $u^\init, v^\init$.
\end{prop}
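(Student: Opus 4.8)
The plan is to adapt, to the Dirichlet setting, the argument used for the Neumann analogue of this statement in \cite{boutondesvillettes2025}: build smooth approximate solutions, derive estimates uniform in the approximation parameter, upgrade them to a uniform Hölder bound by means of \cref{thm:thm-dirichlet-bc}, and pass to the limit by compactness. Concretely, one first introduces a regularized system --- for instance truncating the Lotka--Volterra terms $f_u, f_v$ (and, if convenient, the coefficient $\sigma$) and mollifying the initial data, while keeping them nonnegative, in $\fsW^{2,\infty}(\Omega)$, and compatible with \eqref{eq:bc-SKT}. For a fixed regularization parameter $\eps$, the triangular structure (once $u$ is frozen, the $v$-equation is a genuine uniformly parabolic equation) combined with a Leray--Schauder fixed point argument and standard parabolic theory yields a global smooth solution $(u_\eps,v_\eps)$ on $[0,T]\times\overline\Omega$, with $u_\eps,v_\eps\ge 0$ vanishing on $\partial\Omega$.

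The core of the proof is a set of estimates independent of $\eps$. Since $f_v(u_\eps,v_\eps)\le r_v\, v_\eps$ and $v_\eps\ge 0$, the parabolic maximum principle (the regularization being chosen to preserve this sign condition) gives $0\le v_\eps\le \|v^\init\|_{\fsL^\infty}e^{r_v T}=:M_v$; hence $a_\eps:=1/(d_1+\sigma v_\eps)$ obeys \eqref{eq:assumption-a} with $a_0=1/(d_1+\sigma M_v)$ and $c_0=(d_1+\sigma M_v)/d_1$, uniformly in $\eps$. Integrating the two equations and using the sign of the reaction terms yields uniform $\fsL^\infty_t\fsL^1_x$ bounds; feeding the bound $d_1\le d_1+\sigma v_\eps\le d_1+\sigma M_v$ into a duality estimate (in the spirit of Pierre) for $\partial_t u_\eps-\Delta[(d_1+\sigma v_\eps)u_\eps]=f_u(u_\eps,v_\eps)$ gives $u_\eps\in\fsL^2((0,T)\times\Omega)$ uniformly, and an improved (Meyers-type) duality estimate, followed by a bootstrap exploiting the boundedness of $v_\eps$, promotes this to a uniform bound in $\fsL^p((0,T);\fsL^q(\Omega))$ with $\gamma=2-\frac 2p-\frac dq>0$; this is the step in which the restriction $d\le 4$ is used, exactly as in \cite{boutondesvillettes2025}. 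With $f_v(u_\eps,v_\eps)$ now controlled in such a space, parabolic regularity for the heat equation gives a uniform Hölder bound on $v_\eps$.

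Now set $w_\eps:=(d_1+\sigma v_\eps)\,u_\eps\ge 0$, which vanishes on $\partial\Omega$ and whose initial value $(d_1+\sigma v^\init)u^\init$ lies in $\Ccal^1(\overline\Omega)$ and is compatible with the Dirichlet condition, so that the hypotheses of \cref{thm:thm-dirichlet-bc} are met. Using $\partial_t u_\eps=\Delta w_\eps+f_u(u_\eps,v_\eps)$ and $\partial_t v_\eps=d_2\Delta v_\eps+f_v(u_\eps,v_\eps)$ one computes
\[
  a_\eps\,\partial_t w_\eps-\Delta w_\eps=f_\eps,\qquad
  f_\eps:=f_u(u_\eps,v_\eps)+\sigma\,a_\eps\,u_\eps\,\partial_t v_\eps ,
\]
and --- this is the key device, implemented as in \cite{boutondesvillettes2025} --- the approximation is arranged so that $\partial_t w_\eps\ge 0$. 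The previous estimates, together with maximal parabolic regularity for $v_\eps$ to handle $\partial_t v_\eps=d_2\Delta v_\eps+f_v$, and (since $f_\eps$ is quadratic in the unknowns) one further bootstrap of the exponents, provide a uniform bound on $\|f_\eps\|_{\fsL^p((0,T);\fsL^q(\Omega))}$ with $\gamma>0$. \cref{thm:thm-dirichlet-bc} then yields $\alpha>0$ and a uniform bound on $\|w_\eps\|_{\Ccal^{0,\alpha}([0,T]\times\overline\Omega)}$; dividing by $d_1+\sigma v_\eps\ge d_1>0$ and using the Hölder bound on $v_\eps$ produces a uniform bound on $\|u_\eps\|_{\Ccal^{0,\alpha'}([0,T]\times\overline\Omega)}$ for some $\alpha'>0$. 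By the Arzel\`a--Ascoli theorem a subsequence converges uniformly to a limit $(u,v)\ge 0$ vanishing on $\partial\Omega$; the uniform bounds (in particular on $\Delta w_\eps=a_\eps\partial_t w_\eps-f_\eps$, hence the ensuing parabolic regularity) let one identify the limit of every term, so that $(u,v)$ is a strong solution of \eqref{eq:SKT}--\eqref{eq:bc-SKT} with initial data $u^\init,v^\init$.

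The main obstacle is exactly the pair of points inherited from \cite{boutondesvillettes2025}: obtaining the uniform \emph{critical} $\fsL^p_t\fsL^q_x$ control of $u_\eps$ and of the quadratic source $f_\eps$ --- the improved duality estimate plus the bootstrap, which is what confines the argument to $d\le 4$ --- and designing the approximation so that the monotonicity hypothesis $\partial_t w_\eps\ge 0$ of \cref{thm:thm-dirichlet-bc} holds. Compared with the Neumann case, the additional work is to check that all the above constructions and a priori estimates remain compatible with the homogeneous Dirichlet condition; once that is arranged, the delicate near-boundary behaviour (reduction of oscillation only away from $\partial\Omega$, decay of the solution towards $0$ near $\partial\Omega$) is already absorbed into \cref{thm:thm-dirichlet-bc} itself, which is used here purely as a black box.
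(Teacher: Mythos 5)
There is a genuine gap, and it sits exactly at the two points you yourself flag as ``the main obstacle''. First, your choice $w_\eps:=(d_1+\sigma v_\eps)u_\eps$ cannot satisfy the hypothesis $\partial_t w_\eps\ge 0$ of \cref{thm:thm-dirichlet-bc}, and no regularization can ``arrange'' it: with Lotka--Volterra competition the quantity $(d_1+\sigma v)u$ is in general decreasing in time on large sets, and the monotonicity assumption is a structural hypothesis on the exact solution, not a property one can impose on an approximation scheme without changing the system. The paper's device is different: it first introduces an auxiliary function $m\ge 0$ solving the heat equation $\partial_t m-\Delta m=u(d_{11}u+d_{12}v)$ with homogeneous Dirichlet data, so that $u+m$ solves $\partial_t(u+m)-\Delta(\nu(u+m))=r_u\,u$ with $\nu$ bounded above and below (the negative part of the reaction is absorbed into $m$), and then takes $w:=\int_0^t(\mu u+m)$, a \emph{time integral} of a nonnegative quantity, so that $\partial_t w=\mu u+m\ge 0$ holds automatically. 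Second, your claim that ``improved duality plus a bootstrap'' yields a uniform bound on $u_\eps$, and then on the quadratic source $f_\eps=f_u+\sigma a_\eps u_\eps\partial_t v_\eps$, in some $\fsL^p_t\fsL^q_x$ with $\gamma=2-\frac2p-\frac dq>0$ does not go through in $d=3,4$: duality gives only $u\in\fsL^{2+\delta}([0,T]\times\Omega)$, for which $\gamma<0$, products like $u^2$, $uv$, $u\,\partial_t v$ are even worse, and any attempt to upgrade this by parabolic regularity for $v_\eps$ is circular, since H\"older or maximal-regularity control of $v_\eps$ requires precisely the critical integrability of $u_\eps$ you are trying to prove. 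The paper escapes this because the source of its $w$-equation is $u_\init+r_u\int_0^t u$, and the time integration converts the space-time $\fsL^{2+\delta}$ bound on $u$ into an $\fsL^\infty([0,T];\fsL^{2+\delta}(\Omega))$ bound, which is supercritical ($p=\infty$, $q=2+\zeta$, $\gamma=2-\frac d q>0$) exactly when $d\le 4$.

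A consequence of working with the time-integrated $w$ is that the paper's endgame is also different from yours: the H\"older bound on $w$ does not directly give H\"older regularity of $u$ (and your ``divide by $d_1+\sigma v_\eps$'' step presupposes a H\"older bound on $v_\eps$ that you have not established). Instead, the paper uses the one-sided bound $0\le u\le\Delta\tilde w$ with $\tilde w$ H\"older continuous, the one-sided interpolation inequality \eqref{iare}, and the energy estimate \eqref{eqp} with $p=1$ to obtain $u\in\fsL^{3+\delta}([0,T]\times\Omega)$, after which a standard bootstrap and an approximation procedure give the strong solution. Your overall skeleton (regularize, uniform estimates, \cref{thm:thm-dirichlet-bc} as a black box, compactness) matches the intended strategy, but without the auxiliary function $m$ and the time-integrated $w$ the two decisive steps --- the monotonicity hypothesis and the critical-space control of the forcing --- are missing, so the argument as written does not close.
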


Previous results of global existence of strong solutions to \eqref{eq:SKT} with
Neumann boundary conditions were obtained in dimension $d=2$
\cite{louni1998,desvillettes2024a}, and in all dimension when the
cross-diffusion coefficient $\sigma$ is small \cite{choilui2003}, or when
self-diffusion is added \cite{choilui2004, tuoc2007, tuoc2008}. Dirichlet
boundary conditions were sometime considered as an easy adaptation of the proof.
\medskip

The rest of the paper is structured as follows: In Section~\ref{sec2}, the proof
of Theorem~\ref{thm:thm-dirichlet-bc} is presented. Then, short proofs of
Proposition~\ref{prord} and Proposition~\ref{thm:SKT-dir} are displayed in
Section~\ref{sec3}.

\section{Proof of Theorem \ref{thm:thm-dirichlet-bc}} \label{sec2}

For a domain \(\Omega\), let $\Gamma_{\Omega,D}(t, x, y)$ be the heat kernel on $\Omega$ with Dirichlet
boundary conditions, that is the solution to
\begin{equation}\label{eq:heat-kernel}
  \left\{
    \begin{aligned}
      &(\partial_t - \Delta_x) \Gamma_{\Omega,D}(t,x,y) = 0
      & & \text{for } (t,x) \in (0,\infty) \times \Omega, \\
      & \Gamma_{\Omega,D}(t,x,y) = 0
      & & \text{for } (t,x) \in (0,T) \times \partial\Omega, \\
      & \lim_{t \downarrow 0} \Gamma_{\Omega,D}(t,\cdot,y) = \delta_y
      & & \text{for } y\in\Omega.
    \end{aligned}
  \right.
\end{equation}

We first recall some classical bounds, see, e.g., \cite{boutondesvillettes2025}:

\begin{lemm}\label{thm:bound-gamma}
  Let $\Omega$ be a $\Ccal^2$ bounded domain of $\R^d$. For any time $T > 0$,
  $f \in \fsL^p([0,T]; \fsL^q(\Omega))$, with $p,q \in [1,\infty]$ satisfying
  $ \gamma := 2 - \frac{2}{p} - \frac{d}q > 0$ and $t\in [0,T]$, $x\in \Omega$, $A>0$, we
  have:
  \begin{equation*}
    \bigg| \int_0^t \int_{\Omega}  \Gamma_{\Omega,D} (\frac{t-s}A, x,y) \, f(s,y)\, \dd y\, \dd s \bigg|\lesssim_{A, T, d}
    t^{\frac \gamma 2} \|f\|_{\fsL^p([0,T]; \fsL^q(\Omega))} .
  \end{equation*}

  Moreover, for \(1/2>\epsilon>0\), $t\in [0,T]$ and $y\in \Omega$, we have:
  \begin{equation*}
    \int_{\Omega} |x-y|\, \Gamma_{\Omega,D}(t,x,y)\, \dd x \lesssim_{\eps, T, d, \Omega} t^{\frac 12 - \epsilon}.
  \end{equation*} 
\end{lemm}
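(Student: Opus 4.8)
The plan is to reduce everything to Gaussian computations via the pointwise domination of the Dirichlet heat kernel by the free heat kernel,
\[
  0 \le \Gamma_{\Omega,D}(t,x,y) \le \Gamma_{\R^d}(t,x,y) := (4\pi t)^{-d/2}\, e^{-|x-y|^2/(4t)}, \qquad t>0,\ x,y\in\Omega .
\]
This is classical and I would either quote it or derive it from the parabolic comparison principle: $\Gamma_{\R^d}(\cdot,\cdot,y)$ is nonnegative — hence $\ge 0$ on $(0,T)\times\partial\Omega$ — and solves the heat equation on $(0,\infty)\times\Omega$ with initial datum $\delta_y$, so $\Gamma_{\R^d}(\cdot,\cdot,y)-\Gamma_{\Omega,D}(\cdot,\cdot,y)$ vanishes as $t\downarrow 0$ and is nonnegative on the lateral boundary, whence nonnegative inside. (This is the same input as in \cite{boutondesvillettes2025}, with the Neumann kernel bound replaced here by the cleaner Dirichlet one; the constants it produces are universal rather than $\Omega$-dependent.)

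For the first estimate, I would set $\tau = (t-s)/A$, insert the Gaussian bound, and apply Hölder in $y$ with exponents $q,q'$ (enlarging the domain to $\R^d$), which gives
\[
  \Bigl| \int_{\Omega} \Gamma_{\Omega,D}(\tau, x, y)\, f(s,y)\, \dd y \Bigr|
  \le \bigl\| (4\pi\tau)^{-d/2}\, e^{-|x-\cdot|^2/(4\tau)} \bigr\|_{\fsL^{q'}(\R^d)}\, \|f(s,\cdot)\|_{\fsL^q(\Omega)}
  = c_{d,q}\,\tau^{-d/(2q)}\, \|f(s,\cdot)\|_{\fsL^q(\Omega)},
\]
the last equality being a direct Gaussian integral (with the obvious modifications when $q\in\{1,\infty\}$). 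Then I would integrate in $s\in(0,t)$ and apply Hölder in $s$ with exponents $p,p'$ — note that $\gamma>0$ forces $p>1$, so $p'<\infty$, and for $p=\infty$ one simply takes a supremum instead — arriving at
\[
  \Bigl| \int_0^t \!\!\int_{\Omega} \Gamma_{\Omega,D}(\tfrac{t-s}{A}, x, y)\, f(s,y)\, \dd y\, \dd s \Bigr|
  \lesssim_{A,d,q} \Bigl( \int_0^t (t-s)^{-d p'/(2q)}\, \dd s \Bigr)^{1/p'} \|f\|_{\fsL^p([0,T];\fsL^q(\Omega))} .
\]
The point is that the time integral converges exactly when $\tfrac{d p'}{2q}<1$, equivalently $\tfrac2p+\tfrac dq<2$, i.e.\ $\gamma>0$, and it equals $\tfrac{t^{1-d p'/(2q)}}{1-d p'/(2q)}$; raising to the power $1/p'$ turns the exponent into $\tfrac1{p'}-\tfrac d{2q}=1-\tfrac1p-\tfrac d{2q}=\tfrac\gamma2$, which is the claimed bound, with a constant depending only on $A,d,p,q$ (a fortiori $\lesssim_{A,T,d}$).

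For the second estimate, I would again insert the Gaussian bound and enlarge the integral to $\R^d$,
\[
  \int_{\Omega} |x-y|\, \Gamma_{\Omega,D}(t,x,y)\, \dd x
  \le \int_{\R^d} |x-y|\, (4\pi t)^{-d/2}\, e^{-|x-y|^2/(4t)}\, \dd x = c_d\,\sqrt t ,
\]
recognizing the first absolute moment of a centered Gaussian of covariance $2tI_d$; since $t\le T$ we have $\sqrt t = t^{\epsilon}t^{1/2-\epsilon}\le T^{\epsilon}t^{1/2-\epsilon}$, which is (more than) what is asked, and in fact requires no dependence on $\Omega$ at all.

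The only step requiring genuine care is establishing the Gaussian upper bound on $\Gamma_{\Omega,D}$, and in particular justifying the comparison principle with the singular initial datum $\delta_y$ (e.g.\ by running the comparison from a positive time and passing to the limit, or simply by invoking the standard domination of the Dirichlet heat semigroup by the free one); everything after that is routine bookkeeping of exponents, so I do not anticipate any real obstacle.
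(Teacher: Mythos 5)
Your proof is correct, and it follows the standard route that the paper itself merely cites (kernel upper bound plus Hölder in space and then in time, with the exponent bookkeeping $1/p'-d/(2q)=\gamma/2$ exactly as you carry it out). In fact, by using the clean domination $\Gamma_{\Omega,D}\le(4\pi t)^{-d/2}e^{-|x-y|^2/(4t)}$ available in the Dirichlet case, you even obtain the second estimate with the sharper rate $t^{1/2}$ and constants independent of $\Omega$, which implies the stated $t^{1/2-\epsilon}$ bound.
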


In the case of Neumann boundary conditions, we needed an elaborate lemma (\cite [Lemma 13]{boutondesvillettes2025}) for the
lower bound on the heat kernel as the considered cylinders could intersect the
boundary of the domain. In the Dirichlet case, we only need to consider the
decay in the interior of the domain and thus the following simpler lemma suffices:

\begin{lemm}\label{thm:lower-bound-gamma}
  We consider the fundamental solution $\Gamma_{\B(0, R), D}$ of the heat equation
  with Dirichlet boundary conditions in $\B(0, R)$ for $R>0$.  Then for
  $T_R := a_0\frac{9R^2}{32d}$, the following estimate holds:
  \begin{equation*}
    \inf_{x, y \in \B(0, R/4)}
    \inf_{t \in \left[\frac{T_R}{2c_0a_0},\frac{T_R}{a_0}\right]}
    \Gamma_{\B(0, R), D}\, (t,x,y) \gtrsim_{c_0, d}R^{-d}.
  \end{equation*}
\end{lemm}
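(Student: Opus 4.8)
The plan is to compare the Dirichlet heat kernel with the free heat kernel on $\R^d$, namely $G(t,x,y):=(4\pi t)^{-d/2}\exp\!\big(-|x-y|^2/(4t)\big)$, and to exploit that throughout the time interval $\big[\frac{T_R}{2c_0a_0},\frac{T_R}{a_0}\big]=\big[\frac{9R^2}{64\,c_0 d},\frac{9R^2}{32 d}\big]$ the heat emanating from a point $y\in\B(0,R/4)$ has not yet reached the sphere $\partial\B(0,R)$ in appreciable amount, so that $\Gamma_{\B(0,R),D}$ is still comparable to $G$ there. One may first rescale via $\Gamma_{\B(0,R),D}(t,x,y)=R^{-d}\,\Gamma_{\B(0,1),D}(R^{-2}t,R^{-1}x,R^{-1}y)$ to reduce to $R=1$, but it is just as convenient to keep $R$ general.

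Fix $y\in\B(0,R/4)$ and write $\Gamma_{\B(0,R),D}(t,x,y)=G(t,x,y)-r(t,x,y)$, where $r(\cdot,\cdot,y)$ is the solution on $(0,\infty)\times\B(0,R)$ of the heat equation with zero initial datum and lateral datum $r(t,z,y)=G(t,z,y)$ for $z\in\partial\B(0,R)$. Since $y$ stays away from $\partial\B(0,R)$, this lateral datum is smooth and vanishes as $t\downarrow 0$, so $r$ is the classical "regular part" of the kernel: it is continuous up to the parabolic boundary, and the parabolic maximum/minimum principle give $0\le r(t,x,y)\le\sup_{0\le s\le t}\sup_{z\in\partial\B(0,R)}G(s,z,y)$. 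For $z\in\partial\B(0,R)$, $y\in\B(0,R/4)$ one has $|z-y|\ge\frac34 R$, hence $G(s,z,y)\le(4\pi s)^{-d/2}\exp\!\big(-\frac{9R^2}{64 s}\big)$; the one-variable function $s\mapsto(4\pi s)^{-d/2}\exp\!\big(-\frac{9R^2}{64 s}\big)$ is increasing on $\big(0,\frac{9R^2}{32 d}\big)$, and since $t\le\frac{T_R}{a_0}=\frac{9R^2}{32 d}$ its supremum over $s\in(0,t]$ is attained at $s=t$. Therefore $r(t,x,y)\le(4\pi t)^{-d/2}\exp\!\big(-\frac{9R^2}{64 t}\big)$.

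For the main term, $x,y\in\B(0,R/4)$ gives $|x-y|\le\frac R2$, so $G(t,x,y)\ge(4\pi t)^{-d/2}\exp\!\big(-\frac{R^2}{16 t}\big)$. Setting $\tau:=\frac{R^2}{64 t}$, which ranges over $\big[\frac{d}{18},\frac{c_0 d}{9}\big]$ when $t\in\big[\frac{9R^2}{64 c_0 d},\frac{9R^2}{32 d}\big]$, we obtain
\begin{equation*}
  \Gamma_{\B(0,R),D}(t,x,y)\ \ge\ (4\pi t)^{-d/2}\big(e^{-4\tau}-e^{-9\tau}\big)
  \ \ge\ \Big(\tfrac{8d}{9\pi}\Big)^{d/2} e^{-4c_0 d/9}\big(1-e^{-5/18}\big)\,R^{-d},
\end{equation*}
where in the last inequality I used $(4\pi t)^{-d/2}\ge(\tfrac{8d}{9\pi})^{d/2}R^{-d}$ (valid since $t\le\frac{9R^2}{32 d}$), $e^{-4\tau}\ge e^{-4c_0 d/9}$, and $e^{-4\tau}-e^{-9\tau}=e^{-4\tau}(1-e^{-5\tau})\ge e^{-4c_0 d/9}(1-e^{-5/18})>0$ (valid since $\tau\ge\frac{d}{18}\ge\frac{1}{18}$ and $c_0\ge 1$). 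Since all estimates are uniform in $x,y\in\B(0,R/4)$ and $t$ in the stated interval and the right-hand side is a positive constant depending only on $c_0,d$ times $R^{-d}$, this is exactly the claimed bound.

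The only genuinely delicate point is the justification of the decomposition $\Gamma_{\B(0,R),D}=G-r$ with $r$ continuous up to $t=0$ — that is, that the two kernels carry the same initial singularity $\delta_y$ and the difference solves a heat problem whose parabolic-boundary data is bounded (and vanishes at $t=0$); this is classical on a smooth domain, and here it is particularly transparent since $G(\cdot,\cdot,y)|_{\partial\B(0,R)}$ is smooth because $y$ lies at distance $\ge\frac34 R$ from $\partial\B(0,R)$. Everything else reduces to the elementary monotonicity analysis of $s\mapsto s^{-d/2}e^{-c/s}$. If one only wants a non-explicit constant, the statement also follows immediately by rescaling to $R=1$ and using continuity and strict positivity of $\Gamma_{\B(0,1),D}$ on the compact set $\big[\frac{9}{64c_0 d},\frac{9}{32 d}\big]\times\overline{\B(0,1/4)}\times\overline{\B(0,1/4)}$, which sits inside $(0,\infty)\times\B(0,1)\times\B(0,1)$.
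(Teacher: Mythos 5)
Your proof is correct and follows essentially the same route as the paper: you bound $\Gamma_{\B(0,R),D}$ from below by the free Gaussian minus the running supremum of its values at distance $3R/4$, using the parabolic maximum/comparison principle (you phrase it via the regular-part decomposition $\Gamma = G - r$, the paper via the explicit barrier $\Psi(t,x,y)=p(t,|x-y|)-\sup_{0\le s\le t}p(s,3R/4)$, which is the same estimate), and then conclude by the same monotonicity of $s\mapsto s^{-d/2}e^{-c/s}$ up to $s=2c/d$ and an explicit evaluation on the time window. The resulting constant depends only on $c_0,d$, exactly as claimed.
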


\begin{proof}
  We define
\begin{align*}
  p(t, |x|) &:= \frac{1}{(4\pi t)^{\frac d 2}}\exp\left(\frac{-|x|^2}{4t}\right),\\
  \Psi(t, x, y) &:= p(t, |x - y|) - \sup_{0 \ioe s\ioe t}p(s, 3R/4).
\end{align*}
Then:
\begin{itemize}
  \item if $x\in\partial\B(0, R), y\in\B(0, R/4)$, then $\Gamma_{\B(0, R), D}(t, x, y) = 0 \soe \Psi(t, x, y) = p(t, |x - y|) - \sup_{0 \ioe s\ioe t}p(s, 3R/4)$ since $|x - y|\soe 3R/4$ in this case;
  \item for $x\in\B(0, R)$, $y\in \B(0, R/4)$, $(\partial_t - \Delta_x)(\Gamma_{\B(0, R), D} - \Psi)(t, x, y) = \partial_t (\sup_{0 \ioe s\ioe t}p(s, 3R/4))\soe 0$ (this function is increasing).
\end{itemize}

Thus by the comparison principle in $\B(0, R)$ for a fixed $y\in \B(0, R/4)$, we have for $ t >0, x\in\B(0, R), y\in\B(0, R/4)$:
\begin{equation*}
  \Gamma_{\B(0, R), D}(t, x, y) \soe \Psi(t, x, y) = p(t, |x - y|) - \sup_{0 \ioe s\ioe t}p(s, 3R/4).
\end{equation*}

We follow the computations of the Neumann case in \cite{boutondesvillettes2025} and recall that:

\begin{equation*}
  \sup_{0 \ioe s\ioe t}p(s, |x|) =
    \begin{cases}
      p(t, |x|) &\text{ if } t \ioe \frac{|x|^2}{2d},\\
      p(\frac{|x|^2}{2d}, |x|) &\text { otherwise}.
    \end{cases}
\end{equation*}

Hence, if $t\ioe \frac{9R^2}{32d}$, then $\sup_{0 \ioe s\ioe t}p(s, \frac{3R}{4}) = p(t, \frac{3R}{4})$. But if $x, y\in\B(0, R/4)$ then $|x - y|\ioe R/2$, thus for $t\ioe \frac{9R^2}{32d}$:
\begin{align*}
  \Gamma_{\B(0, R), D}(t, x, y) &\soe p(t, R/2) - \sup_{0 \ioe s\ioe t}p(s, 3R/4)\\
  &= p(t, R/2) - p(t, 3R/4)\\
  &\soe \frac{1}{(4\pi t)^\frac{d}{2}}e^{-\frac{(\frac{3R}{4})^2}{4t}}\left(e^{\frac{5R^2}{16\cdot 4 t}}-1\right).
\end{align*}

If we now assume $0 < \frac{T_R}{2c_0a_0} \ioe t\ioe \frac{T_R}{a_0}\ioe\frac{9R^2}{32d}$, we see that

\begin{equation*}
  \Gamma_{\B(0, R), D}(t, x, y)\soe \left(\frac{a_0}{4\pi T_R}\right)^\frac{d}{2}e^{-\frac{c_0a_09R^2}{16\cdot 2T_R}}\frac{5a_0R^2}{16\cdot 4 T_R} .
\end{equation*}

Hence, if we take $T_R := a_0\frac{9R^2}{32d}$, we end up with the conclusion.
\end{proof}

The next step is to prove a decay of oscillation.  In the Neumann case of
\cite{boutondesvillettes2025}, we proved the decay of oscillations including the
neighbourhoods of the boundary.  Due to Dirichlet boundary conditions, we can
only prove a lemma in the interior of the domain.  Hence we arrive at the
following simpler proposition that is not concerned with the shape of the domain
\(\Omega\).

\begin{prop}\label{thm:reduction-oscillation-interior}
  Fix $d \in \N - \{0\}$, \(p, q \in [1, \infty] \) such that
  $ \gamma := 2 - \frac{2}{p} - \frac{d}q > 0$.  Set \(\beta = \frac{9a_0}{32d}\) and let
  $T>0$.  There exist a constant \(\delta>0\) depending only on $d$, $p$, $a_0$,
  $c_0$, and a constant $C_{f}$ that may also depend on $T$ such that for any
  $R>0$ satisfying $\beta R^2\ioe T$ and any function
  \(w : (-\beta R^2,0] \times B(0,R) \to [0,1]\) solving \eqref{eq:rough-scalar-pde} on
  $(-\beta R^2, 0]\times B(0, R)$ with a coefficient $a := a(t,x)$ fullfilling the bound
  \eqref{eq:assumption-a} over \((-\beta R^2,0] \times B(0, R)\) satisfies
  \begin{equation*}
    \oscillation_{(-\beta R^2/16,0] \times \B(0, R/4)} w  \le 1-\delta + C_f
    \, R^{\gamma} \, \| f \|_{\fsL^p((- \beta R^2,0); \fsL^{q}(B(0, R)))}.
  \end{equation*}
\end{prop}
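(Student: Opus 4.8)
The plan is to sandwich the rough‑coefficient equation \eqref{eq:rough-scalar-pde} between two constant‑coefficient heat equations and then run a De Giorgi–type dichotomy, using the Dirichlet heat kernel of the ball $\B(0,R)$ together with \cref{thm:lower-bound-gamma,thm:bound-gamma}. Since $0\le w\le 1$ and $\partial_t w\ge 0$, the ellipticity bound \eqref{eq:assumption-a} yields $a_0\,\partial_t w-\Delta w\le f$ and $c_0 a_0\,\partial_t w-\Delta w\ge f$ on $(-\beta R^2,0]\times\B(0,R)$; that is, $w$ is a subsolution of $a_0\partial_t-\Delta$ and a supersolution of $c_0 a_0\partial_t-\Delta$, both with source $f$. (This is the only place the sign condition $\partial_t w\ge 0$ enters, exactly as in the Neumann case.) By monotonicity in $t$, the bottom trace $w_-:=\lim_{t\downarrow -\beta R^2}w(t,\cdot)$ exists pointwise with values in $[0,1]$, and one of the two sets $E_0:=\{x\in\B(0,R/4):w_-(x)\le 1/2\}$ and $E_1:=\{x\in\B(0,R/4):w_-(x)>1/2\}$ has measure at least $\tfrac12|\B(0,R/4)|$; this is the dichotomy.

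First I would treat the case $|E_0|\ge\tfrac12|\B(0,R/4)|$. Put $\Phi:=1-w\ge 0$: it is a supersolution of $a_0\partial_t-\Delta$ with source $-f$, it satisfies $\Phi\ge\tfrac12\indic{E_0}$ at $t=-\beta R^2$, and $\Phi\ge 0$ on $(-\beta R^2,0]\times\partial\B(0,R)$. Comparing $\Phi$ with the solution $V$ of $a_0\partial_t V-\Delta V=-f$ on the cylinder with homogeneous Dirichlet data and initial datum $\tfrac12\indic{E_0}$, the parabolic comparison principle gives $\Phi\ge V$. Now $V=V_{\mathrm{hom}}-V_{\mathrm{Duh}}$, where, using that $\Gamma_{\B(0,R),D}(\cdot/a_0,\cdot,\cdot)$ is the fundamental solution of $a_0\partial_t-\Delta$ on $\B(0,R)$, one has $V_{\mathrm{hom}}(t,x)=\tfrac12\int_{E_0}\Gamma_{\B(0,R),D}(\tfrac{t+\beta R^2}{a_0},x,y)\,\dd y$ and $V_{\mathrm{Duh}}$ is the corresponding Duhamel term built from $f$. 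For $(t,x)\in(-\beta R^2/16,0]\times\B(0,R/4)$ and $y\in E_0\subseteq\B(0,R/4)$, the rescaled time $\tfrac{t+\beta R^2}{a_0}$ lies in $(\tfrac{15}{16}\tfrac{T_R}{a_0},\tfrac{T_R}{a_0}]\subseteq[\tfrac{T_R}{2c_0a_0},\tfrac{T_R}{a_0}]$ — here $c_0\ge 1$ and $\beta R^2=T_R$ are used — so \cref{thm:lower-bound-gamma} gives $\Gamma_{\B(0,R),D}\gtrsim_{c_0,d}R^{-d}$ and hence $V_{\mathrm{hom}}\ge\delta$ for a constant $\delta=\delta(c_0,d)>0$ (the $R$‑dependence cancelling since $|\B(0,R/4)|\sim R^d$). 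For the Duhamel term, \cref{thm:bound-gamma} (after translating time to start at $0$, which is legitimate since $\beta R^2\le T$) gives $|V_{\mathrm{Duh}}|\lesssim(\beta R^2)^{\gamma/2}\|f\|_{\fsL^p\fsL^q}\lesssim R^\gamma\|f\|_{\fsL^p\fsL^q}$, with constant depending on $p,q,d,a_0,T$. Hence $w=1-\Phi\le 1-\delta+C_f R^\gamma\|f\|_{\fsL^p\fsL^q}$ on $(-\beta R^2/16,0]\times\B(0,R/4)$, and since $w\ge 0$ there, the oscillation of $w$ over this cylinder is at most $1-\delta+C_f R^\gamma\|f\|_{\fsL^p\fsL^q}$.

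The case $|E_1|\ge\tfrac12|\B(0,R/4)|$ is symmetric: now $w\ge 0$ itself is a supersolution of $c_0 a_0\partial_t-\Delta$ with source $f$, so I would compare it \emph{from below} with the solution $V$ of $c_0 a_0\partial_t V-\Delta V=f$ with homogeneous Dirichlet data and initial datum $\tfrac12\indic{E_1}$; the relevant fundamental solution is now $\Gamma_{\B(0,R),D}(\cdot/(c_0a_0),\cdot,\cdot)$. For $(t,x)\in(-\beta R^2/16,0]\times\B(0,R/4)$ one has $\tfrac{T_R}{2}\le t+\beta R^2\le T_R\le c_0 T_R$, hence $\tfrac{t+\beta R^2}{c_0a_0}\in[\tfrac{T_R}{2c_0a_0},\tfrac{T_R}{a_0}]$, so \cref{thm:lower-bound-gamma} applies as before, and \cref{thm:bound-gamma} again controls the Duhamel term. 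One obtains $w\ge\delta-C_f R^\gamma\|f\|_{\fsL^p\fsL^q}$ on $(-\beta R^2/16,0]\times\B(0,R/4)$; since $w\le 1$ there, the oscillation of $w$ over this cylinder is again at most $1-\delta+C_f R^\gamma\|f\|_{\fsL^p\fsL^q}$, which is the claim.

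The step I expect to be the main obstacle is the matching of the time windows: the oscillation is measured over the last sixteenth of the time span of the large cylinder, and one has to check that for \emph{every} $(t,x)$ in the small cylinder the time elapsed since the bottom of the cylinder lies in the interval $[\tfrac{T_R}{2c_0a_0},\tfrac{T_R}{a_0}]$ on which \cref{thm:lower-bound-gamma} provides a pointwise lower bound — this closes precisely because $c_0\ge 1$ and $\beta$ is chosen so that $\beta R^2=T_R$. Two further, routine points: $w$ is a priori only as regular as the linear theory provides, so the comparison‑principle steps (with merely $\fsL^\infty$ initial data $\tfrac12\indic{E_i}$) are justified by the usual approximation argument; and the open bottom of the cylinder is handled by running the comparison on $(-\beta R^2+\eps,0]$ and letting $\eps\downarrow 0$, the strict slack in the time estimates (since $c_0\ge 1>\tfrac{8}{15}$) absorbing the shift.
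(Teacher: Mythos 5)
Your proposal is correct and follows essentially the same route as the paper's proof: the same dichotomy on the bottom slice of the cylinder, comparison with the constant-coefficient Dirichlet problems for $a_0\partial_t-\Delta$ and $c_0a_0\partial_t-\Delta$ on $\B(0,R)$, the kernel lower bound of \cref{thm:lower-bound-gamma} on the matched time window $[\tfrac{T_R}{2c_0a_0},\tfrac{T_R}{a_0}]$, and \cref{thm:bound-gamma} for the Duhamel term, with the one-sided bounds converted to an oscillation bound via $0\le w\le 1$. The differences are only cosmetic: you restrict directly to the last sixteenth of the time interval (the paper proves the bound on the last half), you take the source $-f$ in the $1-w$ comparison (the paper writes $f$, which is immaterial since the forcing is estimated in absolute value), and you are slightly more careful about the open bottom of the cylinder.
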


\begin{proof}[Proof of Proposition~\ref{thm:reduction-oscillation-interior}]
  We have the initial time \(-T_R = -\beta R^2\) (matching
  \cref{thm:lower-bound-gamma}).

  We first assume that
  $|\{w(-T_R,\cdot) \ge \frac 12 \} \cap \B(0, R/4)| \ge \frac 12 |\B(0, R/4)|$, and
  consider $E:= \{w(-T_R,\cdot) \ge \frac 12\} \cap \B(0, R/4)$. We compare our solution
  with $v$ defined by:
\begin{equation*}
  \left\{
    \begin{aligned}
      & (a_0c_0\partial_t - \Delta)v = f
      & & \text{for } (t,x) \in (-T_R,0) \times \B(0, R), \\
      & v(t,x) = 0
      & & \text{for } (t,x) \in (-T_R,0) \times \partial \B(0, R), \\
      & v(-T_R,x) = \frac 12 \indic{E}(x)
      & & \text{for } x \in \B(0, R).
    \end{aligned}
  \right.
\end{equation*}
We notice that 
\begin{equation*}
  \left\{
    \begin{aligned}
     & (a_0c_0\partial_t - \Delta)(w - v) = (a_0c_0 -a)\partial_t w \soe 0 & & \text{for } (t,x) \in (-T_R,0) \times \B(0, R), \\
     & (w - v)(t, x) = w(t, x) \soe 0 & & \text{for } (t,x) \in (-T_R,0) \times \partial \B(0, R).
    \end{aligned}
  \right.
\end{equation*}

By the comparison principle, we get for $\frac{-T_R}{2} \ioe t\ioe 0$, $x\in\B(0, R/4)$:
\begin{align*}
  w(t, x)
  &\soe v(t, x)\\
  &= \int_{y \in \B(0, R)} \Gamma_{\B(0, R), D}(\frac{t+T_R}{a_0c_0},x,y) \,\frac 12 \indic{E}(y)\,
    \dd y\\
  &\phantom{=} + \int_{s=-T_R}^t \int_{y \in \B(0, R)}
    \Gamma_{\B(0, R), D}(\frac{t-s}{a_0c_0},x,y) f(s,y)\, \dd y\, \dd s\\
  &\gtrsim_{c_0, d} R^{-d}\frac{|\B(0, R/4)|}{2} \\
  &\phantom{=} -\int_{\sigma=0}^{t+T_R} \int_{y \in \B(0, R)}
    \Gamma_{\B(0, R), D}\left(\frac{(t+T_R)-\sigma}{a_0c_0},x,y\right) |f(\sigma-T_R,y)|\, \dd y\, \dd \sigma\\
  &\soe \delta - C_1 (t+T_R)^{\frac \gamma 2}\|f\|_{\fsL^{p}([-T_R,0]; \fsL^{q}(\B(0, R)))}\\
  &\soe \delta - C_2 R^{\gamma}\|f\|_{\fsL^{p}([-T_R,0]; \fsL^{q}(\B(0, R)))},
\end{align*}
for $\delta$ some constant depending only on $a_0, c_0, p, q$ and $C_1, C_2$ that may
also depend on \(T\).  Here we used in the second inequality
Lemma~\ref{thm:lower-bound-gamma} to get a positive term and
Lemma~\ref{thm:bound-gamma} for bounding the forcing term in the next
inequality.  \medskip

We now treat the other case $|\{w(-T_R,\cdot) \ioe \frac 12 \} \cap \B(0, R/4)| \ge \frac 12 |\B(0, R/4)|$ by considering
 again $E:= \{w(-T_R,\cdot) \ioe \frac 12\} \cap \B(0, R/4)$. We compare $1 - w$ with $v$ defined by

\begin{equation*}
  \left\{
    \begin{aligned}
      & (a_0\partial_t - \Delta)v = f
      & & \text{for } (t,x) \in (-T_R,0) \times \B(0, R), \\
      & v(t,x) = 0
      & & \text{for } (t,x) \in (-T_R,0) \times \partial \B(0, R), \\
      & v(-T_R,x) = \frac 12 \indic{E}(x)
      & & \text{for } x \in \B(0, R).
    \end{aligned}
  \right.
\end{equation*}

Since we assumed that $w\ioe 1$ in $\B(0, R)\subset \Omega$, we have $1- w \soe 0 = v$ on $\partial\B(0, R)$, thus we similarly end up with $1 - w \soe v$. From this point, the proof is the same as in the first case.
\end{proof}

Before proving Theorem~\ref{thm:thm-dirichlet-bc}, we present two intermediate lemmas that allow us to bound $w(t, z)$ for $z$ near the boundary $\partial\Omega$ or for small times $t$. We introduce first the following notation:
\medskip

For $c > 0$, we define $v_{c}$ as the solution to
  \begin{equation}\label{eq:def-vc}
    \left\{
      \begin{aligned}
        &(c \partial_t - \Delta) v_{c}(t,x) = f(t,x)
        & & \text{for } (t,x) \in (0,T] \times \Omega,\\
        & v_{c}(t,x) = 0
        & & \text{for } (t,x) \in (0,T] \times \partial\Omega,\\
        & v_{c}(0,x) = w_{\init}(x)
        & & \text{for } x \in \Omega.
      \end{aligned}
    \right.
  \end{equation}

  Within the framework of Theorem~\ref{thm:thm-dirichlet-bc}, we observe that the comparison principle yields 
the inequality
  \begin{equation}\label{eq:bound-vc}
  v_{a_0c_0}\ioe w\ioe v_{a_0} \quad\text{on } \quad\Omega_T.
  \end{equation}
  Indeed, as $a_0 \ioe a$ and $\partial_t w \soe 0$, we find
  \begin{equation*}
    \left\{
      \begin{aligned}
        &(a_0\partial_t - \Delta)(v_{a_0} - w) = - (a_0 - a)\partial_t w \soe 0 & &\text{in}\quad\Omega_T,\\
        & v_{a_0} - w = 0& &\text{on} \quad (0, T]\times\partial\Omega.
      \end{aligned}
    \right.
  \end{equation*}
  This implies by the comparison principle that $w\ioe v_{a_0}$, and the other inequality in \eqref{eq:bound-vc} follows similarly.

\begin{lemm}\label{thm:boundary-bound}
  Within the framework of Theorem~\ref{thm:thm-dirichlet-bc}, let
  $\tilde{\gamma} := \min(\gamma, 1-2\eps)$, where $\eps >0$ is a small parameter. For
  $z\in\Omega$, we set the distance $d_z := d(z, \partial\Omega)$. Then there exists some constant
  $C$ that may depend on $\Omega, \gamma, a_0, c_0, d, T$ and the small parameter $\eps$
  such that for all $x\in\Omega$, $t\in[0, T]$:
  \begin{equation}
    w(t, x) \ioe C\, (\|w_\init\|_{\Lip(\overline\Omega)} + \|f\|_{\fsL^p([0, T], \fsL^q(\Omega))})\, d_x^{\tilde{\gamma}}.
  \end{equation}
\end{lemm}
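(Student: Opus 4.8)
The plan is to bound $w$ by the comparison function $v_{a_0}$ from \eqref{eq:bound-vc}, and then to establish the pointwise estimate $v_{a_0}(t,x) \lesssim (\|w_\init\|_{\Lip(\overline\Omega)} + \|f\|_{\fsL^p\fsL^q})\, d_x^{\tilde\gamma}$ directly from the representation formula via the Dirichlet heat kernel. Write $v_{a_0}$ using the heat semigroup on $\Omega$ with scaled time: $v_{a_0}(t,x) = \int_\Omega \Gamma_{\Omega,D}(t/a_0, x, y) w_\init(y)\,\dd y + \frac{1}{a_0}\int_0^t\int_\Omega \Gamma_{\Omega,D}((t-s)/a_0, x, y) f(s,y)\,\dd y\,\dd s$. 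The forcing term is already controlled by the first inequality of \cref{thm:bound-gamma}, giving a bound $\lesssim t^{\gamma/2}\|f\|_{\fsL^p\fsL^q} \lesssim_T \|f\|_{\fsL^p\fsL^q}$; I will need to upgrade this to carry the factor $d_x^{\tilde\gamma}$, which is where the second estimate of \cref{thm:bound-gamma} and the trick below enter.

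The key observation for getting the $d_x$ factor is that $w_\init$ vanishes on $\partial\Omega$ and is Lipschitz, so $|w_\init(y)| \le \|w_\init\|_{\Lip}\, d_y \le \|w_\init\|_{\Lip}(d_x + |x-y|)$. Plugging this in, the initial-data term is bounded by $\|w_\init\|_{\Lip}\big(d_x \int_\Omega \Gamma_{\Omega,D}(t/a_0,x,y)\,\dd y + \int_\Omega |x-y|\,\Gamma_{\Omega,D}(t/a_0,x,y)\,\dd y\big)$; the first integral is $\le 1$ by the maximum principle (or mass conservation bound), and the second is $\lesssim_{\eps,T,d,\Omega} t^{1/2-\eps}$ by the second estimate of \cref{thm:bound-gamma}. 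So this term is $\lesssim \|w_\init\|_{\Lip}(d_x + t^{1/2-\eps})$. To convert the $t^{1/2-\eps}$ (and likewise the $t^{\gamma/2}$ from the forcing) into a $d_x$-power, I use a standard waiting-time/finite-speed argument: for times $t \le d_x^2$ (say $t \le c\, d_x^2$ for a suitable constant so that $\B(x, d_x)\subset\Omega$ and the scaled time stays in range), the Dirichlet heat kernel on $\Omega$ restricted to the ball $\B(x, d_x/2)$ is comparable to the whole-space kernel, and the standard Gaussian tail bounds give $\int_\Omega |x-y|\Gamma_{\Omega,D}(\tau,x,y)\dd y \lesssim \sqrt\tau$ and the forcing contribution $\lesssim \tau^{\gamma/2}$ — but now with $\tau \le d_x^2$ these become $\lesssim d_x^{1-2\eps}$ and $\lesssim d_x^{\gamma}$ respectively (absorbing $\eps$); for times $t \ge c\,d_x^2$ we have $t^{1/2-\eps}\le T^{1/2-\eps}$ is just bounded, but then $1 \lesssim_T (t/c)^{\tilde\gamma/2}\cdot d_x^{-\tilde\gamma}$... actually the cleaner route is: we want $w(t,x)\lesssim d_x^{\tilde\gamma}$, and when $d_x^2 \gtrsim T$ this is just the global bound $w \le v_{a_0} \lesssim_T \|w_\init\|_{\Lip}+\|f\|_{\fsL^p\fsL^q}$ already established (since $d_x$ is bounded below), while when $d_x^2 \lesssim T$ we split on whether $t \lessgtr \beta' d_x^2$ and in both subcases use the appropriate monotonicity: for $t \ge \beta' d_x^2$ we bound via the oscillation/decay already in hand or simply $v_{a_0}(t,x)\le \|v_{a_0}\|_{L^\infty}$ combined with $d_x^{\tilde\gamma}\gtrsim (\beta' d_x^2)^{\tilde\gamma/2}$...

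Let me restructure more carefully. I would split into the case $t \le \beta\, d_x^2$ and $t > \beta\, d_x^2$ with $\beta = \frac{9a_0}{32d}$ as in \cref{thm:reduction-oscillation-interior}. For $t \le \beta d_x^2$: the ball $\B(x, d_x)\subset\Omega$, so comparing $\Gamma_{\Omega,D}$ with $\Gamma_{\B(x,d_x),D}$ (the latter is smaller by the maximum principle, but we need an \emph{upper} bound on $v_{a_0}$, hence on the kernel against $w_\init$ and $f$; for that use instead the whole-space kernel $\Gamma_{\R^d} \ge \Gamma_{\Omega,D}$ as an upper bound in the forcing integral and the $|x-y|$ integral, OR directly apply \cref{thm:bound-gamma} over the sub-ball $\B(x, d_x)$ after cutting off, since those estimates are stated for a general $\Ccal^2$ domain). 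Concretely, restrict the equation $v_{a_0}$ to the cylinder $(0,t]\times\B(x,d_x)$: on that cylinder $v_{a_0}$ solves the same PDE with a boundary value that is at most $\|v_{a_0}\|_{L^\infty([0,\beta d_x^2])}$; by a rescaled version of \cref{thm:reduction-oscillation-interior} applied to $v_{a_0}/\|v_{a_0}\|_{L^\infty}$ — or more simply, by comparison with the solution of the heat equation in $\B(x,d_x)$ having that constant boundary data plus the particular solution — one gets $v_{a_0}(t,x) \lesssim \|v_{a_0}\|_{L^\infty}\cdot(\text{something}\ \le 1) + d_x^\gamma\|f\| + d_x^{1-2\eps}\|w_\init\|_{\Lip}$, which is not yet $\lesssim d_x^{\tilde\gamma}$ because of the first term. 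The honest fix is an iteration (as in the standard proof of boundary Hölder continuity): apply the oscillation-decay lemma on a geometric sequence of shrinking cylinders $(-\beta(2^{-k}R_0)^2, 0]\times\B(x, 2^{-k}R_0)$ with $R_0 \sim d_x$... no — here $x$ is a fixed interior point and $d_x$ is fixed, so there's no room to shrink toward $x$.

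Therefore the cleanest plan: \textbf{Step 1.} By \eqref{eq:bound-vc} it suffices to bound $v_{a_0}$. \textbf{Step 2.} For any interior point $z$, consider a boundary point $z_\partial$ with $|z - z_\partial| = d_z$; use the representation of $v_{a_0}$ and the bounds of \cref{thm:bound-gamma} \emph{together with the fact that the Dirichlet kernel and its mass decay near the boundary}. Specifically, I claim $\int_\Omega \Gamma_{\Omega,D}(\tau, z, y)\,\dd y \lesssim \min(1, d_z/\sqrt\tau)$ — this is the classical Dirichlet heat-kernel mass estimate (the missing mass near the boundary) — actually we also get, more usefully, $\int_\Omega \Gamma_{\Omega,D}(\tau,z,y)\,\dd y \lesssim (d_z/\sqrt\tau)$ when $\tau \gtrsim d_z^2$. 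Combining: the $w_\init$-term is $\le \|w_\init\|_{\Lip}\big(d_z\int\Gamma + \int|z-y|\Gamma\big) \lesssim \|w_\init\|_{\Lip}\big(d_z\min(1,\tfrac{d_z}{\sqrt\tau}) + \min(\sqrt\tau,\ ?)\big)$; with $\tau = t/a_0 \le T/a_0$ this gives $\lesssim \|w_\init\|_{\Lip} d_z^{\tilde\gamma}$ after checking the two regimes $t\lessgtr d_z^2$ (using $\min(\sqrt\tau, d_z\sqrt\tau/\text{(lower bd)})$... for the forcing term, $\frac1{a_0}\int_0^t\int_\Omega \Gamma_{\Omega,D}(\tfrac{t-s}{a_0},z,y)|f|\,\dd y\,\dd s$ is $\lesssim_T \min(t^{\gamma/2},\ d_z\, t^{(\gamma-1)/2})\|f\| \lesssim d_z^{\tilde\gamma}\|f\|$ again by splitting on $t\lessgtr d_z^2$ and using the boundary decay of $\Gamma_{\Omega,D}$ inside the $s$-integral (the integrand is supported effectively on $t-s \lesssim d_z^2$ after accounting for the $d_z/\sqrt{t-s}$ mass factor).

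\textbf{The main obstacle} is establishing, in a self-contained way consistent with the tools the paper has already set up, the near-boundary decay factor $d_z/\sqrt\tau$ for the Dirichlet heat kernel — i.e. the analogue of \cref{thm:bound-gamma} but with an extra power of $d_z$. The clean route is: for $\tau \le \beta d_z^2$ the kernel "hasn't seen" the boundary, $\B(z,d_z)\subset\Omega$, and $\Gamma_{\Omega,D}(\tau,z,\cdot)\le \Gamma_{\R^d}(\tau,z,\cdot)$ pointwise (maximum principle), so \cref{thm:bound-gamma}-type bounds apply and give $d_z^\gamma$, $d_z^{1-2\eps}$ directly because $\tau \le \beta d_z^2$; for $\tau > \beta d_z^2$ one uses the barrier $\Gamma_{\Omega,D}(\tau,z,y) \le C\,\frac{d_z}{\sqrt\tau}\,\Gamma_{\R^d}(c\tau, z, y)$, which follows from comparing with the half-space kernel near a flattened piece of $\partial\Omega$ (the $\Ccal^2$ regularity of $\Omega$ gives a uniform interior and exterior ball condition, hence a standard barrier) — this is classical (see e.g. the references already cited for heat-kernel bounds) and I would invoke it as such, stating it as a short sub-lemma with a barrier-function proof, then feeding it into the two integrals above. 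Once that decay factor is in hand, the rest is the two-regime case split $t \lessgtr d_z^2$ and elementary estimates, choosing the final constant $C$ to depend on $\Omega,\gamma,a_0,c_0,d,T,\eps$ as claimed.
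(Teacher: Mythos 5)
Your final plan is essentially the paper's own proof: bound $w$ by $v_{a_0}$ via \eqref{eq:bound-vc}, exploit $w_\init(y)\le\|w_\init\|_{\Lip}(d_x+|x-y|)$, split into the regimes $t\lesssim d_x^2$ (where Lemma~\ref{thm:bound-gamma} converts powers of $t$ into powers of $d_x$) and $t\gtrsim d_x^2$ (where one uses boundary decay of the Dirichlet heat kernel together with a near/far split of the forcing integral in time and H\"older in space and time). The only difference is cosmetic: you invoke the one-sided bound $\Gamma_{\Omega,D}(\tau,x,y)\lesssim \frac{d_x}{\sqrt{\tau}}\,\Gamma_{\R^d}(c\tau,x,y)$ where the paper cites the two-sided Gaussian estimate $\Gamma_{\Omega,D}(t,x,y)\lesssim \frac{d_xd_y}{t^{(d+2)/2}}e^{-c|x-y|^2/t}$ of \cite{hui1992} -- the same classical ingredient, treated as a citation in both cases -- and your sketch, despite the false starts and the unfinished borderline exponents ($\gamma=1,2$, absorbed anyway by $\tilde\gamma=\min(\gamma,1-2\eps)$), would go through along the same lines.
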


\begin{proof}
  We have to consider two different cases, depending on how large $t$ is.\medskip

  First we assume that $t^{\frac 12} \ioe d_x$.  For a point
  $z\in \Omega$, the distance to the boundary is bounded by the triangular inequality
  as $d_z \ioe |z - x| + d_x$.  Hence there exists $z_0\in\partial\Omega$ with
  $|z - z_0| = d_z \le |z-x| + d_x$, which implies
  $w_\init(z) \ioe |w_\init(z) - w_\init(z_0)| \ioe (|z - x| + d_x)\|w_\init\|_{\Lip}$. We
  consider $\Gamma_{\Omega, D}$ the heat kernel with Dirichlet boundary conditions in $\Omega$
  and we recall \eqref{eq:def-vc} and \eqref{eq:bound-vc}. Then,
  \begin{align*}
    w(t, x) &\ioe v_{a_0}(t, x) \\
    &= \int_\Omega \Gamma_{\Omega, D}(\frac{t}{a_0}, x, z)w_\init(z) + \int_0^t\int_\Omega \Gamma_{\Omega, D}(\frac{t-s}{a_0}, x, z)f(s, z)\\
    &\lesssim \|w_\init\|_{\Lip}\int_\Omega \Gamma_{\Omega, D}(\frac{t}{a_0}, x, z)(|z-x| + d_x) + t^{\frac{\gamma}{2}} \|f\|_{\fsL^p([0, T], \fsL^q(\Omega))}\\
    &\lesssim \|w_\init\|_{\Lip}(t^{\frac{1}{2}-\eps} + d_x) + t^{\frac{\gamma}{2}} \|f\|_{\fsL^p([0, T], \fsL^q(\Omega))}\\
    &\lesssim (\|w_\init\|_{\Lip}+\|f\|_{\fsL^p([0, T], \fsL^q(\Omega))})\, d_x^{\tilde{\gamma}}.
  \end{align*}

  To pass from the second to the third line, we used Lemma~\ref{thm:bound-gamma} (first inequality) to bound the second term and we used the same lemma (second inequality) to pass from the third to the fourth line for the first term.\medskip

  We now consider the case when $t$ is large: $t \soe d_x^2$. We need to
  understand the behaviour of $\Gamma_{\Omega, D}$ near the boundary for large $t$. We
  then compare $w$ and $v_{a_0}$. We use for that some Gaussian estimates near
  the boundary as in \cite{hui1992}, where it is claimed that for some $c > 0$:
  \begin{equation*}
    \Gamma_{\Omega, D}(t, x, y) \lesssim \frac{d_xd_y}{t^{\frac{d+2}{2}}}e^{-c\frac{|x - y|^2}{t}}.
  \end{equation*}
  
  We first notice that
  \begin{align*}
    w(t, x) &\ioe v_{a_0}(t, x)\\
    &= \int_\Omega \Gamma_{\Omega, D}(\frac{t}{a_0}, x, z)w_\init(z)\dd z + \int_0^t\int_\Omega \Gamma_{\Omega, D}(\frac{t-s}{a_0}, x, z)f(s, z)\dd z\dd s \\
    &= \int_\Omega \Gamma_{\Omega, D}(\frac{t}{a_0}, x, z)w_\init(z)\dd z\\
    &\phantom{=} + \int_0^{t-d_x^2}\int_\Omega \Gamma_{\Omega, D}(\frac{t-s}{a_0}, x, z)f(s, z)\dd z \dd s\\
    &\phantom{=} + \int_{t-d_x^2}^t\int_\Omega \Gamma_{\Omega, D}(\frac{t-s}{a_0}, x, z)f(s, z)\dd z \dd s.
  \end{align*}

  We will bound the three terms appearing in the previous expression. Similarly as in the first case, we have $d_z\ioe |z - x| + d_x$, and hence $w_\init(z) \ioe (|z - x| + d_x) \|w_\init\|_{\Lip}$. We obtain
  \begin{align*}
    \int_\Omega \Gamma_{\Omega, D}(\frac{t}{a_0}, x, z)w_\init(z) \dd z&\lesssim \|w_\init\|_{\Lip}\frac{d_x}{t^{\frac d2 + 1}}\int_\Omega e^{-ca_0\frac{|x-z|^2}{t}}(|z - x| + d_x)^2\dd z \\
    &\lesssim \|w_\init\|_{\Lip}\frac{d_x}{t^{\frac d2 + 1}}\int_{\R^d} e^{-ca_0\frac{|x-z|^2}{t}}(|z - x|^2+d_x^2)\dd z \\
    &\lesssim \|w_\init\|_{\Lip}\frac{d_x}{t^{\frac d2 + 1}}(t^{\frac d2 +1}+t^{\frac d2}d_x^2)\\
    &\lesssim \|w_\init\|_{\Lip}d_x(1 + \frac{d_x}{t^\frac12})\\
    &\lesssim \|w_\init\|_{\Lip}d_x,
  \end{align*}
  where to obtain the last line, we used the assumption $d_x\ioe t^\frac12$.\medskip

  Then by Lemma~\ref{thm:bound-gamma}, we have immediately
  \begin{align*}
    \int_{t-d_x^2}^t\int_\Omega \Gamma_{\Omega, D}(\frac{t-s}{a_0}, x, z)f(s, z)\dd z \dd s
    &= \int_0^{d_x^2}\int_\Omega \Gamma_{\Omega, D}(\frac{s}{a_0}, x, z)f(t- s, z)\dd z \dd s\\
    &\lesssim d_x^\gamma\, \|f\|_{\fsL^p([0, T], \fsL^q(\Omega))}.
  \end{align*}

  Finally (we note that $\gamma > 0$ implies $p>1$):
  \begin{align*}
    &\int_0^{t-d_x^2}\int_\Omega \Gamma_{\Omega, D}(\frac{t-s}{a_0}, x, z)f(s, z)\dd z \dd s\\
    &\lesssim d_x\int_0^{t-d_x^2}\frac{1}{(t-s)^{\frac d2 + 1}}\int_\Omega e^{-ca_0\frac{|x-z|^2}{t-s}}(|z - x|+d_x)f(s, z)\dd z \dd s\\
    &\lesssim d_x\int_0^{t-d_x^2}\frac{1}{(t-s)^{\frac d2 + 1}}\|f(s)\|_{\fsL^q(\Omega)} \|z \mapsto e^{-ca_0\frac{|x-z|^2}{t-s}}(|z - x|+d_x)\|_{\fsL^{q'}(\R^d)}\dd s\\
    &\lesssim d_x\int_0^{t-d_x^2}\frac{1}{(t-s)^{\frac d2 + 1}}\|f(s)\|_{\fsL^q(\Omega)} \left((t-s)^{\frac{d}{2q'} + \frac 12} + (t-s)^{\frac{d}{2q'}}d_x\right)\dd s\\
    &\lesssim d_x\|f\|_{\fsL^p([0, T], \fsL^q(\Omega))}\left[\left(\int_{d_x^2}^t s^{(- \frac d2 - 1 + \frac{d}{2q'} + \frac 12)p'}\dd s\right)^{\frac{1}{p'}} + d_x\left(\int_{d_x^2}^t s^{(- \frac d2 - 1 + \frac{d}{2q'})p'}\dd s\right)^{\frac{1}{p'}}\right]\\
    &\lesssim d_x\|f\|_{\fsL^p([0, T], \fsL^q(\Omega))}\left[\left(\int_{d_x^2}^t s^{p'\frac{\gamma - 1}{2} - 1}\dd s\right)^{\frac{1}{p'}} + d_x\left(\int_{d_x^2}^t s^{p'\frac{\gamma - 2}{2} - 1}\dd s\right)^{\frac{1}{p'}}\right].
  \end{align*}

  To obtain the last line we used $(- \frac d2 - 1 + \frac{d}{2q'} + \frac 12)p' = p'\frac{\gamma - 1}{2} - 1$ and $(- \frac d2 - 1 + \frac{d}{2q'})p' = p'\frac{\gamma - 2}{2} - 1$.\medskip
  
  We recall that $0<\gamma\ioe2$, and first assume that $\gamma < 2$. Then
  \begin{equation*}
    d_x^2\left(\int_{d_x^2}^t s^{p'\frac{\gamma - 2}{2} - 1}\dd s\right)^{\frac{1}{p'}}\lesssim d_x^2\,
d_x^{\gamma - 2}\ioe d_x^\gamma.
  \end{equation*}

  If $\gamma =2$ (which amounts to $p=q=+\infty$ and thus $p'=1$)
  \begin{equation*}
    d_x^2\,\int_{d_x^2}^t s^{- 1}\dd s\ioe d_x^2\, (2\lvert \ln d_x\rvert +\lvert\ln T\rvert)\lesssim_T d_x^{\tilde{\gamma}}.
  \end{equation*}

  Similarly, if $\gamma < 1$, we get
  \begin{equation*}
    d_x\left(\int_{d_x^2}^t s^{p'\frac{\gamma - 1}{2} - 1}\dd s\right)^{\frac{1}{p'}}\lesssim d_x\, d_x^{\gamma - 1}\ioe d_x^\gamma.
  \end{equation*}

  If $\gamma = 1$, we obtain
  \begin{equation*}
    d_x\left(\int_{d_x^2}^t s^{p'\frac{\gamma - 1}{2} - 1}\dd s\right)^{\frac{1}{p'}}\ioe d_x\, (|2\ln d_x|^{\frac{1}{p'}} + \lvert\ln T\rvert)\lesssim_{T, p} d_x^{\tilde{\gamma}}.
  \end{equation*}

  Finally, if $\gamma >1$, we see that
    \begin{equation*}
    d_x\left(\int_{d_x^2}^t s^{p'\frac{\gamma - 1}{2} - 1}\dd s\right)^{\frac{1}{p'}}\ioe d_x \, T^{\frac{\gamma - 1}{2}}\lesssim_{T, p} d_x.
  \end{equation*}

  Summarizing all the estimates above, we have shown that

  \begin{equation*}
    w(t, x) \lesssim (\|w_\init\|_{\Lip} + \|f\|_{\fsL^p([0, T], \fsL^q(\Omega))}) \, d_x^{\tilde{\gamma}}.
  \end{equation*}

  This concludes the second case and thus the proof of the lemma.
\end{proof}

\begin{lemm}\label{thm:short-time-bond}
  Within the framework of Theorem~\ref{thm:thm-dirichlet-bc}, there exists some
  constant $C$ that may depend on $\Omega, \gamma, a_0, c_0, d, T$ such that for all
  $x\in\Omega$, $t\in[0, T]$,
  \begin{equation*}
    |w(t, x)-w(0, x)| \ioe C \, (\|w_\init\|_{\Ccal^1(\overline\Omega)} + \|f\|_{\fsL^p([0, T], \fsL^q(\Omega))}) \, t^{\frac{\min(1, \gamma)}{2}}.
  \end{equation*}
\end{lemm}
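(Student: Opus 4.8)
The plan is to first reduce to a one-sided estimate. Since $\partial_t w\ge0$, we have $|w(t,x)-w(0,x)|=w(t,x)-w_\init(x)\ge0$, and by the comparison \eqref{eq:bound-vc} this is at most $v_{a_0}(t,x)-w_\init(x)$, so it suffices to bound the latter from above. Inserting the Duhamel representation of $v_{a_0}$ and writing $w_\init(x)=w_\init(x)\int_\Omega\Gamma_{\Omega,D}(\tfrac t{a_0},x,z)\dd z+w_\init(x)\bigl(1-\int_\Omega\Gamma_{\Omega,D}(\tfrac t{a_0},x,z)\dd z\bigr)$, I would reduce to estimating
\begin{align*}
  I_1 &:= \int_\Omega \Gamma_{\Omega,D}(\tfrac t{a_0},x,z)\,\bigl(w_\init(z)-w_\init(x)\bigr)\,\dd z, &
  I_2 &:= w_\init(x)\,\Bigl(1-\int_\Omega\Gamma_{\Omega,D}(\tfrac t{a_0},x,z)\,\dd z\Bigr),
\end{align*}
together with the forcing contribution $\int_0^t\!\!\int_\Omega\Gamma_{\Omega,D}(\tfrac{t-s}{a_0},x,z)f(s,z)\dd z\dd s$, which is immediately $\lesssim_{a_0,T,d}t^{\gamma/2}\|f\|_{\fsL^p([0,T],\fsL^q(\Omega))}$ by the first inequality of Lemma~\ref{thm:bound-gamma}.

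For $I_1$ I would use the Gaussian upper bound $0\le\Gamma_{\Omega,D}(\tau,x,z)\le(4\pi\tau)^{-d/2}\exp(-|x-z|^2/(4\tau))$, which follows from the maximum principle since the difference with the free heat kernel solves the heat equation in $\Omega$ and is nonnegative on its parabolic boundary; combined with $|w_\init(z)-w_\init(x)|\le\|w_\init\|_{\Ccal^1(\overline\Omega)}|z-x|$ this gives $|I_1|\le\|w_\init\|_{\Ccal^1(\overline\Omega)}\int_{\R^d}(4\pi\tfrac t{a_0})^{-d/2}e^{-a_0|x-z|^2/(4t)}|x-z|\dd z\lesssim_{a_0,d}\|w_\init\|_{\Ccal^1(\overline\Omega)}\sqrt t$. (Note that the second inequality of Lemma~\ref{thm:bound-gamma} would only give $t^{1/2-\eps}$, which is not enough when $\gamma\ge1$, so the sharper bound on the free kernel is needed here.)

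The main point, and the step I expect to be the real obstacle, is $I_2$, where the Dirichlet boundary behaviour enters. Since $w_\init$ vanishes on $\partial\Omega$ and is Lipschitz, $|w_\init(x)|\le\|w_\init\|_{\Ccal^1(\overline\Omega)}\,d_x$, so it suffices to show $d_x\bigl(1-\int_\Omega\Gamma_{\Omega,D}(\tfrac t{a_0},x,z)\dd z\bigr)\lesssim_{a_0,d}\sqrt t$. The mass $m(\tau,x):=\int_\Omega\Gamma_{\Omega,D}(\tau,x,z)\dd z$ solves the heat equation in $\Omega$ with datum $1$ and zero boundary values, so $0\le m\le1$. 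For a lower bound on $m$, I note that $B:=\B(x,d_x)\subset\Omega$, and by domain monotonicity of Dirichlet heat kernels (again a maximum-principle argument) $m(\tau,x)\ge\int_B\Gamma_{B,D}(\tau,x,z)\dd z=:U(\tau)$; here $U(\tau)$ is the value at the centre of the solution of the heat equation in $B$ with initial datum $1$ and zero boundary values. The explicit barrier $\psi(\tau,z):=|z-x|^2/d_x^2+2d\tau/d_x^2$ satisfies $(\partial_\tau-\Delta)\psi=0$, $\psi\ge1$ on $\partial B$, and $\psi(0,\cdot)\ge0$, hence dominates $1-U$ by the comparison principle; therefore $1-U(\tau)\le2d\tau/d_x^2$, so $1-m(\tfrac t{a_0},x)\le\min(1,2dt/(a_0d_x^2))$, and multiplying by $d_x$ gives $\le\sqrt{2dt/a_0}$ in both cases $d_x^2\ge2dt/a_0$ and $d_x^2<2dt/a_0$.

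Finally I would combine: $0\le w(t,x)-w_\init(x)\le|I_1|+|I_2|+(\text{forcing term})\lesssim_{\Omega,\gamma,a_0,d,T}(\|w_\init\|_{\Ccal^1(\overline\Omega)}+\|f\|_{\fsL^p([0,T],\fsL^q(\Omega))})(\sqrt t+t^{\gamma/2})$, and since $t\le T$ one has $\sqrt t+t^{\gamma/2}\lesssim_{T,\gamma}t^{\min(1,\gamma)/2}$, which is the claimed estimate. The only genuinely non-routine step is the bound on $I_2$: it uses that $w_\init$ is already of size $d_x$ near $\partial\Omega$, so the loss of mass of the Dirichlet heat flow in that region is harmless; everything else is a direct combination of Lemma~\ref{thm:bound-gamma}, the Gaussian kernel bound, and the Lipschitz regularity of $w_\init$.
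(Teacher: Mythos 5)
Your proof is correct, but it takes a genuinely different route from the paper. The paper also reduces to $v_{a_0}$ via \eqref{eq:bound-vc}, but then splits $v_{a_0}=v_{a_0,f}+\tilde v_{a_0}$ and disposes of the homogeneous part by citing a parabolic Schauder estimate (\cite[Theorem~5.1.11]{lunardi1995}), which gives $|\tilde v_{a_0}(t,x)-\tilde v_{a_0}(0,x)|\lesssim t^{1/2}\|w_\init\|_{\Ccal^1(\overline\Omega)}$ in one stroke; indeed the authors remark that the Neumann-case argument fails precisely because $\int_\Omega\Gamma_{\Omega,D}(t,x,z)\,\dd z$ is no longer conserved. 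You instead attack that mass defect head-on: Gaussian domination $\Gamma_{\Omega,D}\le\Gamma_{\R^d}$ handles $I_1$ with the clean rate $\sqrt t$ (correctly noting that the $t^{1/2-\eps}$ bound of Lemma~\ref{thm:bound-gamma} would be too weak when $\gamma\ge1$), and for $I_2$ you exploit $|w_\init(x)|\le\|w_\init\|_{\Ccal^1(\overline\Omega)}\,d_x$ together with domain monotonicity and the caloric barrier $|z-x|^2/d_x^2+2d\tau/d_x^2$ in $\B(x,d_x)$ to get $d_x\,(1-\int_\Omega\Gamma_{\Omega,D}(\tfrac t{a_0},x,z)\dd z)\lesssim\sqrt t$; your one-sided reduction via $\partial_t w\ge0$ is also legitimate, so no lower comparison with $v_{a_0c_0}$ is needed. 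What each approach buys: the paper's is shorter but leans on an external Schauder theorem for the Dirichlet realization, while yours is self-contained, uses only maximum-principle comparisons already in the paper's toolbox, and makes explicit why the boundary compatibility $w_\init=0$ on $\partial\Omega$ neutralizes the mass loss. Two harmless points to tidy: the Lipschitz bound $|w_\init(z)-w_\init(x)|\le\|w_\init\|_{\Ccal^1(\overline\Omega)}|z-x|$ for non-convex $\Omega$ costs a constant depending on $\Omega$ (as in the paper's own use of $\|w_\init\|_{\Lip}$), and the Duhamel formula for $(a_0\partial_t-\Delta)v=f$ carries a factor $1/a_0$ on the forcing term, which only affects constants.
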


\begin{proof}
  To prove this estimate, we cannot use the same method as in the Neumann case
  of \cite{boutondesvillettes2025}, because the mass $\int\Gamma_{\Omega, D}(t, x, y)\, \dd y$
  is not preserved anymore.  Nevertheless, using \eqref{eq:bound-vc}, we know
  that
  $$v_{a_0c_0}(t, x) - w(0, x)\ioe w(t, x) - w(0, x)\ioe v_{a_0}(t, x)- w(0, x).$$

  Now we use the decomposition $v_{a_0} = v_{a_0, f} + \tilde{v}_{a_0}$ with
  \begin{equation*}
    \left\{
      \begin{aligned}
        &(a_0 \partial_t - \Delta) v_{a_0, f}(t,x) = f(t,x)
        & & \text{for } (t,x) \in (0,T] \times \Omega,\\
        & v_{a_0, f}(t,x) = 0
        & & \text{for } (t,x) \in (0,T] \times \partial\Omega,\\
        & v_{a_0, f}(0,x) = 0
        & & \text{for } x \in \Omega,
      \end{aligned}
    \right.
  \end{equation*}
  and

  \begin{equation*}
    \left\{
      \begin{aligned}
        &(a_0 \partial_t - \Delta) \tilde{v}_{a_0}(t,x) = 0
        & & \text{for } (t,x) \in (0,T] \times \Omega,\\
        & \tilde{v}_{a_0}(t,x) = 0
        & & \text{for } (t,x) \in (0,T] \times \partial\Omega,\\
        & \tilde{v}_{a_0}(0,x) = w_{\init}(x)
        & & \text{for } x \in \Omega.
      \end{aligned}
    \right.
  \end{equation*}

 Lemma~\ref{thm:bound-gamma} provides the bound
  \begin{equation*}
    |v_{a_0, f}(t,x)|\lesssim t^{\frac{\gamma}{2}}\|f \|_{\fsL^p([0,T]; \fsL^q(\Omega))}.
  \end{equation*}

  Using a Schauder estimate proven in \cite[Theorem~5.1.11]{lunardi1995}, we get
  the inequality
  $\|\tilde{v}_{a_0}\|_{\Ccal^{0, 1}_{\text{par}}([0, T] \times\overline\Omega)} \lesssim \|w_\init\|_{\Ccal^1(\overline\Omega)}$,
  where
  \begin{equation*}
  \Ccal^{0, 1}_{\text{par}} := \left\{\fctset{u}{[0, T]\times\Omega}{\R} : \exists C, \forall (t,x)\neq (t', y)\in\Omega\times[0, T], \frac{|u(t, x) - u(t', y)|}{|t-t'|^\frac12 + |x - y|}\ioe C\right\}
  \end{equation*}
  is a parabolic Hölder space.  Hence
  $|\tilde{v}_{a_0}(t, x) - \tilde{v}_{a_0}(0, x)|\lesssim t^\frac{1}{2}\|w_\init\|_{\Ccal^1(\overline\Omega)}$,
  and
  \begin{align*}
    w(t, x)- w(0, x)
    &\ioe v_{a_0}(t, x)- w(0, x) \\
    &\ioe \tilde{v}_{a_0}(t, x) - w(0, x) + v_{a_0, f}(t,x)\\
    &\lesssim (t^{\frac{1}{2}} + t^{\frac{\gamma}{2}})(\|w_\init\|_{\Ccal^1(\overline\Omega)} + \|f \|_{\fsL^p([0,T]; \fsL^q(\Omega))}).
  \end{align*}
  
  Similarly
  \begin{align*}
    w(t, x)- w(0, x) 
    &\soe \tilde{v}_{a_0c_0}(t, x) - w(0, x) + v_{a_0c_0, f}(t,x)\\
    &\gtrsim - (t^{\frac{1}{2}} + t^{\frac{\gamma}{2}})(\|w_\init\|_{\Ccal^1(\overline\Omega)} + \|f \|_{\fsL^p([0,T]; \fsL^q(\Omega))}).
      \qedhere
  \end{align*}
\end{proof}

We now can conclude the 

\begin{proof}[Proof of Theorem~\ref{thm:thm-dirichlet-bc}]
  Let $x, y\in\Omega$, $0 < t', t < T$ and let $\eps > 0$ be a small parameter. We
  recall that we define for $z\in\Omega$ the distance $d_z := d(z, \partial\Omega)$.  All the
  constants that we write under $\lesssim$ are independent of $x, y, t', t, d_x, d_y$
  but may depend on $\Omega, T, a_0, c_0, d$ and the small parameter $\eps$. We
  recall that $\tilde{\gamma} := \min(\gamma, 1-2\eps)$.  \medskip

  \textit{First case; near the boundary: $\min(d_x, d_y)\ioe 2|x-y|$.}\newline
  We notice that $d_y \ioe d_x + |x - y|$ and $d_x \ioe d_y + |x - y|$, hence $d_x, d_y \ioe 3|x-y|$. We can thus use Lemma~\ref{thm:boundary-bound} for $(t, x)$ and $(t', y)$ to obtain:
  \begin{equation*}
    |w(t, x) - w(t', y)| \ioe |w(t, x)| + |w(t', y)| \lesssim (\|w_\init\|_{\Lip} + \|f\|_{\fsL^p([0, T], \fsL^q(\Omega))})|x-y|^{\tilde\gamma}.
  \end{equation*}

  This yields the desired Hölder regularity for $\a := \tilde{\gamma}$ in the case when $\min(d_x, d_y)\ioe 2|x-y|$.
  \medskip

  \textit{Second case; in small time: $t'^{\frac 12}, t^{\frac 12} \ioe |x - y|$.}\newline
  We use Lemma~\ref{thm:short-time-bond} with $(t, x)$ and $(t', y)$, to obtain:
   \begin{align*}
    &|w(t, x) - w(t', y)| \nonumber\\ 
    &\ioe |w(0, x) - w(0, y)|+ |w(t, x) - w(0, x)|+ |w(t', y) - w(0, y)|\\
    &\lesssim |x - y|\|w_\init\|_{\Ccal^1(\overline\Omega)} +  (\|w_\init\|_{\Ccal^1(\overline\Omega)} + \|f\|_{\fsL^p([0, T], \fsL^q(\Omega))})(t^{\frac{\tilde{\gamma}}{2}} + t'^{\frac{\tilde{\gamma}}{2}})\\
    &\lesssim (\|w_\init\|_{\Ccal^1(\overline\Omega)} + \|f\|_{\fsL^p([0, T], \fsL^q(\Omega))})|x - y|^{\tilde{\gamma}}.
  \end{align*}
    This yields the desired Hölder regularity for $\a := \tilde{\gamma}$ in the case when $t'^{\frac 12}, t^{\frac 12} \ioe |x - y|$.\medskip

    \textit{Third case; far from the boundary and in large time:
      $|x - y| \ioe \min(t'^{\frac 12}, t^{\frac 12}, \frac{\min(d_x, d_y)}{2})$}\newline
    Without loss of generality, we can assume that $t'\ioe t$.

    We use the reduction of oscillations, starting from the scale
    $R := \min(t^{\frac 12}, \min(d_x, d_y))$. Let $z_1, z_2\in\B(x, R)$,
    $\tilde t_1, \tilde t_2 \in (t - \beta R^2, t]$.\medskip
  
    If $\min(d_x, d_y)\ioe t^{\frac 12}$, we notice that
    $d_x \ioe d_y + |x - y|$, thus
    $d_x \ioe \min(d_x, d_y) + |x - y| \ioe \frac32 \min(d_x, d_y) \lesssim R$. Hence,
    we obtain that $d_{z_1}, d_{z_2} \ioe d_x + R \lesssim R$.  Thus we can use
    Lemma~\ref{thm:boundary-bound}, as in the first case, to obtain
  \begin{align*}
    w(\tilde t_1, z_1), w(\tilde t_2, z_2)
    &\lesssim (\|w_\init\|_\Lip + \|f \|_{\fsL^p([0,T]; \fsL^q(\Omega))}) R^{\tilde{\gamma}}.
  \end{align*}
  From this, we conclude
  \begin{align*}
    \oscillation_{(t-\beta R^2,t] \times B(x,R )} w
    &\ioe 2\|w\|_{\fsL^\infty((t-\beta R^2,t] \times B(x,R ))}\\
    &\lesssim (\|w_\init\|_\Lip + \|f \|_{\fsL^p([0,T]; \fsL^q(\Omega))}) R^{\tilde{\gamma}}.
  \end{align*}
  
  If $\min(d_x, d_y)\soe t^{\frac 12}$, we notice that $\tilde t_1\ioe t \ioe R^2$, thus we can use Lemma~\ref{thm:short-time-bond}, as in the second case, to obtain
  \begin{align*}
    |w(\tilde t_1, z_1) - w(0, z_1)|
    &\lesssim (\|w_\init\|_{\Ccal^1(\overline\Omega)} + \|f \|_{\fsL^p([0,T]; \fsL^q(\Omega))}) {\tilde{t}_1}^{\frac{\tilde{\gamma}}{2}}\\
    &\lesssim (\|w_\init\|_{\Ccal^1(\overline\Omega)} + \|f \|_{\fsL^p([0,T]; \fsL^q(\Omega))})R^{\tilde{\gamma}}.  
  \end{align*}
  Similarly,
  $$|w(\tilde t_2, z_2) - w(0, z_2)|\lesssim (\|w_\init\|_{\Ccal^1(\overline\Omega)} + \|f \|_{\fsL^p([0,T]; \fsL^q(\Omega))})R^{\tilde{\gamma}}.$$
  Thus,
  \begin{align*}
    &|w(\tilde t_1, z_1) - w(\tilde t_2, z_2)|\\
    &\ioe |w(\tilde t_1, z_1) - w(0, z_1)| + |w(\tilde t_2, z_2) - w(0, z_2)| + |w(0, z_1) - w(0, z_2)|\\
    &\lesssim (\|w_\init\|_{\Ccal^1(\overline\Omega)} + \|f \|_{\fsL^p([0,T]; \fsL^q(\Omega))}) R^{\tilde{\gamma}}.
  \end{align*}

  Hence in both cases:
  \begin{equation*}
    \oscillation_{(t-\beta R^2,t] \times B(x,R )} w \ioe C(\|w_\init\|_{\Ccal^1(\overline\Omega)} + \|f \|_{\fsL^p([0,T]; \fsL^q(\Omega))}) R^{\tilde{\gamma}},
  \end{equation*}
  for some constant $C$ that does not depend on $t$ or $d_x, d_y$.\medskip
  
  Let $o_k := \oscillation_{(t- \beta R^2 4^{-2{k}},t] \times B(x,R 4^{-{k}})} w$,
  $m_k := \inf_{(t- \beta R^2 4^{-2{k}},t] \times B(x,R 4^{-{k}})} w$ and
  $\tilde{C}_f := C_fR^\gamma\| f \|_{\fsL^p([0, T]; \fsL^{q}(\Omega))}$, where $C_f$ is
  defined in Proposition~\ref{thm:reduction-oscillation-interior}. We notice
  that
  $w_k(\tilde t, \tilde x) := \frac{w(t + \tilde t, x + \tilde x) - m_k}{o_k} \in [0,1]$
  for any
  $\tilde t \in (-\beta R^24^{-2k}, 0], \tilde{x}\in\B(0, R4^{-k}) \subset \Omega$. Furthermore,
  $w_k$ is a solution to
\begin{equation*}
  a(t + \tilde t,x + \tilde x) \partial_{\tilde t} w_k(\tilde t,\tilde x) - \Delta_{\tilde x} w_k(\tilde t, \tilde x) = \frac{f(t+ \tilde t, x + \tilde x)}{o_k}
  \qquad \text{and}  \quad \partial_{\tilde t} w_k \ge 0.
\end{equation*}
  The function $(\tilde t, \tilde x) \mapsto a(t + \tilde t,x + \tilde x)$ satisfies the bound \eqref{eq:assumption-a}, thus one can apply Proposition~\ref{thm:reduction-oscillation-interior} (at scale $R4^{-k}$) to $w_k$:
  \begin{align*}
    o_{k+1} &= o_k \oscillation_{(t- \beta R^2 4^{-2(k+1)},t] \times B(x,R 4^{-(k+1)})} \left(\frac{w - m_k}{o_k}\right)\\
    &\ioe (1-\delta) o_k + C_f \,\, 
    \, \left(\frac{R}{4^k}\right)^{\gamma} \, \| f \|_{\fsL^p((0, T); \fsL^{q}(\Omega))}\\
    & = (1 - \delta)o_k + \tilde{C}_f4^{-\gamma k}.
  \end{align*}

  Hence, we obtain that
  \begin{align*}
    o_k
    &\ioe \sum_{l= 0}^{k-1} (1-\delta)^l4^{-((k-1)-l)\gamma}\tilde{C}_f + (1 - \delta)^ko_0\\
    &\ioe \sum_{l= 0}^{k-1} \max(\frac{1}{4^\gamma}, (1 - \delta))^{k-1}\tilde{C}_f + (1 - \delta)^ko_0\\
    &\ioe \tilde{C}_f\, k\max(\frac{1}{4^\gamma}, (1 - \delta))^{k-1}+ (1 - \delta)^ko_0\\
    &\ioe (\tilde{C}_f+o_0)\, k\,\max\left(\frac{1}{4^\gamma}, (1 - \delta) \right)^{k-1}\\
    &\lesssim R^{\tilde{\gamma}}\,(\| f \|_{\fsL^p((0, T); \fsL^{q}(\Omega))}+\|w_\init\|_{\Ccal^1(\overline\Omega)})\,k\,\max\left(\frac{1}{4^\gamma}, (1 - \delta) \right)^{k-1}.
  \end{align*}

  Let $\Lambda\soe\max(\frac{1}{4^\gamma}, (1 - \delta)) + \eta$ for some small $\eta > 0$ such that $\Lambda < 1$ ($\eta$ only depends on $\delta, \gamma$). Then
  \begin{equation*}
      o_k \lesssim_{\Lambda, \eta} R^{\tilde{\gamma}}(\| f \|_{\fsL^p((0, T); \fsL^{q}(\Omega))}+\|w_\init\|_{\Ccal^1(\overline\Omega)})\Lambda^k.
  \end{equation*}
  For $(x, t) \neq (y, t')$, let $k_0$ be the smallest integer such that: $R4^{-k_0} \ioe \max(|x - y|, (\frac{|t - t'|}{\beta})^\frac 12)$. Then:
  \begin{align*}
    |w(t, x) - w(t', y)| &\ioe o_{k_0-1}\\
    &\lesssim \Lambda^{k_0-1}R^{\tilde\gamma}(\| f \|_{\fsL^p((0, T); \fsL^{q}(\Omega))}+\|w_\init\|_{\Ccal^1(\overline\Omega)}).
  \end{align*}

  Letting $\a_\Lambda := \frac{-\ln \Lambda}{\ln 4}$, we find
  \begin{align*}
    \Lambda^{k_0-1}R^{\tilde\gamma}
    &= \exp((k_0-1)\alpha_\Lambda(-\ln4))R^{\tilde\gamma}\\
    &\ioe 4^{-\a_\Lambda(k_0 - 1)}R^{\tilde\gamma}\\
    &\lesssim \max(|x - y|, |t - t'|^\frac 12)^{\a_\Lambda}R^{\tilde\gamma - \a_\Lambda}\\
    &\ioe \max(|x - y|, |t - t'|^\frac 12)^{\a_\Lambda}T^{\frac{\tilde\gamma - \a_\Lambda}{2}}.
  \end{align*}

  The last line holds provided that $\a_\Lambda\ioe \tilde\gamma$, which we can always ensure by increasing $\Lambda$ close to $1$. Thus:
  \begin{equation*}
    |w(t, x) - w(t', y)| \lesssim (|x - y|^{\a_\Lambda} +  |t - t'|^\frac{\a_\Lambda}{2})(\| f \|_{\fsL^p((0, T); \fsL^{q}(\Omega))}+\|w_\init\|_{\Ccal^1(\overline\Omega)}).
  \end{equation*}

  This yields the desired Hölder regularity and thus concludes the proof.
\end{proof}

\section{Applications}\label{sec3}
\subsection{Application to reversible chemistry}\label{sec:chemistry}

\begin{proof}[Proof of Proposition~\ref{prord}]
  We only sketch the proof, as it is very close to that of
  \cite{boutondesvillettes2025}. The most standard a priori estimate, which
  works for the Dirichlet as well as the Neumann boundary conditions, states
  (cf.\ \cite{boutondesvillettes2025}) that for some constant $C_p >0$ depending
  only on $p>0$, and for all $T>0$,
  \begin{multline}
    \label{u14p}
    \sum_{i=1}^4 \int_{\Omega} \frac{u_i^{p+1}}{p+1} (T)
    + \frac{4p}{(p+1)^2}  \sum_{i=1}^4  d_i \int_0^T \int_{\Omega} |\nabla ( u_i^{\frac{p+1}2} )|^2\\
    \le  \sum_{i=1}^4 \int_{\Omega} \frac{u_i^{p+1}}{p+1} (0) + C_p \, \sum_{i=1}^4  \int_0^T\int_{\Omega} {u_i^{p+2}} .
  \end{multline}

  We define $w:= \int_0^t\left(\sum_{i=1}^4 d_i u_i\right)$, and check that
  $w \soe 0$, $\partial_t w \soe 0$, $w(t, x) = 0$ if $x\in\partial\Omega$. Moreover
\begin{equation*}
  \Delta w = \int_0^t\Delta\left(\sum_{i=1}^4 d_i u_i\right) = \sum_{i=1}^4 \int_0^t\partial_tu_i =\sum_{i=1}^4u_i - \sum_{i=1}^4u_i^\init.
\end{equation*}
Thus, defining $a := \frac{\sum_{i=1}^4u_i}{\sum_{i=1}^4d_1u_i}$, an immediate computation yields
\begin{equation*}
  a\partial_t w - \Delta w = \sum_{i=1}^4u_i^\init,
\end{equation*}
while the bound $\frac{1}{\max d_i}\ioe a \ioe \frac{1}{\min d_i}$ clearly holds. Theorem~\ref{thm:thm-dirichlet-bc} implies
then that $\sum_{i=1}^4u_i = \sum_{i=1}^4u_i^\init+ \Delta w$, with $w\in\Ccal^{0,\a}([0,T] \times \overline{\Omega})$ for some $\a>0$.
\medskip

Then we use the following interpolation identity, used at a given time $t$ (cf. \cite{boutondesvillettes2025}),
 \begin{equation} \label{esalp2}
    \bigg\|  \sum_{i=1}^4  u_i  -  \sum_{i=1}^4  u_i^\init \bigg\|_{\fsL^{2\,\frac{3 - \alpha}{2-\alpha}} (\Omega)}^3
    \le  C\, \| {w} \|_{\Ccal^{0,\alpha}(\overline{\Omega})}^{\frac3{3-\alpha}}\, \bigg\| \nabla \bigg[  \sum_{i=1}^4  u_i  -  \sum_{i=1}^4  u_i^\init \bigg]\, \bigg\|_{\fsL^2(\Omega)}^{3\frac{2 -\alpha}{3-\alpha}} .
  \end{equation}
  Using estimate \eqref{u14p} for $p=1$ together with estimate \eqref{esalp2}
  ensures (cf.\ \cite{boutondesvillettes2025}) that $u_i \in \fsL^{3+\delta}([0,T] \times \Omega)$
  for some $\delta>0$, which is sufficient to get an estimate (for $u_i$) in all
  $\fsL^p([0,T] \times \Omega)$ when $p\in [1, \infty[)$. With this estimate, existence of strong
  solutions is obtained after the use of an approximation process (still
  cf. \cite{boutondesvillettes2025}).
\end{proof}

\subsection{Application to the triangular SKT system}\label{SKT}

\begin{proof} [Proof of Proposition~\ref{thm:SKT-dir}]

  Here also, we only sketch the proof, since it is very close to
  \cite{boutondesvillettes2025}.  The standard a priori estimate, which works
  for the Dirichlet as well as the Neumann boundary conditions, states (cf.\
  \cite{boutondesvillettes2025}) that for some constant $C_p >0$ depending only
  on $p>0$, and for all $T>0$,
  \begin{equation} \label{eqp}
    \int_{\Omega} \frac{u^{p+1}}{p+1} (T) + d_1\,\frac{4p}{(p+1)^2} \, \int_0^T \int_{\Omega} \,|\nabla (u^{\frac{p+1}{2}})|^2
    \le  C_p + C_p \int_0^T \int_{\Omega} u^{p+2} .
  \end{equation}

  An application of the maximum principle also shows that
  $\|v\|_{\fsL^\infty(\Omega_T)} \ioe C$.\medskip

We introduce the quantity $m:= m(t,x)$ defined as the solution of the heat equation
$$ \partial_t m - \Delta m = u\,(d_{11}u + d_{12}v), $$
together with the (homogeneous) Dirichlet boundary condition $m = 0$
on $[0,T] \times \partial\Omega$, and the initial datum $m(0,\cdot) = 0$. By the minimum principle,
it is clear that $m\ge 0$. 
 \medskip

 Defining \(\mu = d_1 +\sigma v\) and $\nu := \frac{\mu\,u +m}{u+m} $, we observe that
 $\min(1,d_1) \le \nu \le \max(1, d_1+ \sigma\,\|v\|_{\infty})$, and $u+m$ satisfies the equation
$$ \partial_t(u+m) - \Delta ( \nu \, (u+m) ) = r_u\,u, $$
together with the (homogeneous) Dirichlet boundary condition $\mu\,u + m = 0$ on
$[0,T] \times \partial\Omega$, and the initial condition $(u+m)(0,x) \equiv u_{\init}(x)$. As a
consequence, the improved duality Lemma~\ref{thm:improved-duality} proved in Appendix~\ref{sec:appendix} ensures that
\begin{equation} \label{ulm}
u, m\in \fsL^{2+\delta}([0,T]\times\Omega),
\end{equation}
for some \(\delta>0\).

We now consider the quantity $w := \int_0^t (\mu u + m)$, and observe that $w\ge 0$ and
$\partial_t w \ge 0$. Moreover
$\Delta w = \int_0^t \Delta (\mu u + m) = \int_0^t [\partial_t (u + m) - r_u\,u] = u + m - u_{in} - r_u \,\int_0^t u$.

It satisfies therefore the parabolic equation
\begin{equation}
  \label{eq:evolution-w2}
 \nu^{-1}\, \partial_t w - \Delta w =  u_{\init} +  r_u \,\int_0^t u,
\end{equation}
together with homogeneous Dirichlet boundary conditions, and the initial condition $w(0, \cdot) = 0$.
\medskip

Observing that $\int_0^t u$ lies in $\fsL^{\infty}([0,T]; \fsL^{2+\delta}(\Omega))$ for $\delta>0$ small enough, we see that
we can use \cref{thm:thm-dirichlet-bc} with $p =\infty$ and $q= 2 + \zeta$ for $\zeta>0$ small enough, recalling that
$d \le 4$, and deduce from it that $\|w\|_{\Ccal^{0,\alpha}([0,T] \times \overline{\Omega})} \le C$,
for some $\alpha, C>0$.

Then, we use the estimate
$$ 0 \le u \le u+m = \Delta w  +  u_{\init} +    r_u \,\int_0^t u 
\le  \Delta \tilde{w}, $$
where
\begin{equation}\label{stara}
  \tilde{w} := w +  \frac{|x|^2}{2d}\, \|u_{\init}\|_{\infty}
  + r_u \, \Delta^{-1} \int_0^t u,
\end{equation}
and  $\Delta^{-1}$ is defined as the operator going from $L^2(\Omega)$ towards itself, which to a function associates the (unique) solution to the Poisson equation with (homogeneous) Dirichlet boundary conditions.
\medskip

Observing that $ \int_0^t u $ lies in $\fsL^{\infty}([0,T]; \fsL^{2+ \kappa}(\Omega))$ for $\kappa>0$
small enough, we see that $\Delta^{-1} \int_0^t u$ lies in
$\fsL^{\infty}([0,T]; \fsW^{2,2+\delta}(\Omega))$ for some $\delta>0$ and therefore in
$\fsL^{\infty}([0,T]; \Ccal^{0,\alpha}(\overline{\Omega}))$, for some $\alpha >0$, in dimension
$d \le 4$, by Sobolev embedding. The same holds for the function
$(t,x) \mapsto \frac{|x|^2}{2d}$.

We now use a so-called one-sided interpolation proven in \cite{boutondesvillettes2025}, which ensures that (for a given time $t$)
\begin{equation}\label{iare}
  \|u \|_{\fsL^{2\,\frac{3 - \alpha}{2-\alpha}} (\Omega)}^3
  \le   C\, \bigg( \,  \| \tilde{w} \|_{\Ccal^{0,\alpha}(\overline{\Omega})}^{\frac3{3-\alpha}}\,
  \| \nabla u\|_{\fsL^2(\Omega)}^{3\frac{2 -\alpha}{3-\alpha}} +
  \| \tilde{w} \|_{\Ccal^{0,\alpha}(\overline{\Omega})}^{3} \,\bigg) .
\end{equation}
We end up the proof as in the previous subsection. Using estimate \eqref{eqp}
for $p=1$ together with estimate \eqref{iare} ensures
(cf.\ \cite{boutondesvillettes2025}) that $u \in \fsL^{3+\delta}([0,T] \times \Omega)$ for some
$\delta>0$, which is sufficient to get an estimate (for $u$) in all $\fsL^p([0,T] \times \Omega)$
when $p\in [1, \infty))$, and to conclude.
\end{proof}

\appendix
\section{Improved duality lemma with Dirichlet boundary conditions}\label{sec:appendix}

Typically, like in \cite{canizo2014}, duality lemmas are proven with
Neumann boundary conditions.  The adaptation to Dirichlet boundary conditions is
direct and, for completeness, we provide a precise statement with a short proof.

\begin{lemm}\label{thm:improved-duality}
  Consider a bounded, \(\Ccal^2\) domain \(\Omega \subset \R^d\) and \(T>0\).  On
  \((0,T] \times \Omega\), consider also a diffusion coefficient \(\mu\) which is bounded above and below, that is,
\(\mu_- \le \mu \le \mu_+\)
for constants
  \(\mu_-,\mu_+ \in (0,\infty)\).  Then, there exists  \(\delta>0\) such
  that a solution \(u\) to
  \begin{equation}
    \left\{
      \begin{aligned}
        &\partial_t u - \Delta(\mu u) = f & & \text{in } (0,T] \times \Omega, \\
        &u = 0 & & \text{on } (0,T] \times \partial\Omega, \\
        &u = u^{\init} & & \text{on } \{0 \} \times \Omega,
      \end{aligned}
    \right.
  \end{equation}
  for
  initial data \(u^{\init} \in \fsL^{2+\delta}(\Omega)\), satisfies
  \begin{equation}
    \lVert u \rVert_{\fsL^{2+\delta}([0,T]\times\Omega)}
    \lesssim
    \lVert u^{\init} \rVert_{\fsL^{2+\delta}(\Omega)}
    + \lVert f \rVert_{\fsL^p((0,T];\fsL^q(\Omega))}
  \end{equation}
  for \(p,q \in [1,\infty]\) satisfying
  \(\frac{2+d}{2+\delta} \ge \frac{2}{p} + \frac{d}{q} - 2\).
\end{lemm}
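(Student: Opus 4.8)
The plan is to use the classical \emph{duality method} of Pierre, in the quantitatively improved form of Cañizo--Desvillettes--Fellner, transposed to homogeneous Dirichlet data. Everything will rest on an \emph{a priori} estimate for the backward \emph{dual} problem: given $g$ on $(0,T]\times\Omega$, let $w$ solve $-\partial_t w-\mu\,\Delta w=g$ on $(0,T)\times\Omega$, with $w=0$ on $(0,T)\times\partial\Omega$ and $w(T,\cdot)=0$. I will show there is $\delta>0$, depending only on $\mu_-,\mu_+,d,\Omega,T$, such that, writing $r:=2+\delta$ and $r':=r/(r-1)\in(1,2)$, one has $\|\Delta w\|_{\fsL^{r'}((0,T)\times\Omega)}\lesssim\|g\|_{\fsL^{r'}((0,T)\times\Omega)}$; going back to the equation, $\Ccal^2$ elliptic regularity and the terminal condition $w(T,\cdot)=0$ then also give $\|\partial_t w\|_{\fsL^{r'}((0,T)\times\Omega)}+\|w\|_{\fsL^{r'}((0,T);\fsW^{2,r'}(\Omega))}+\sup_{0\ioe t\ioe T}\|w(t,\cdot)\|_{\fsL^{r'}(\Omega)}\lesssim\|g\|_{\fsL^{r'}((0,T)\times\Omega)}$, the last summand from $w(t,\cdot)=-\int_t^T\partial_s w\,\dd s$ and Hölder in time.

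To prove the dual estimate I first treat $\mu\equiv1$. For the backward heat problem with zero terminal data, classical parabolic $\fsL^p$ maximal regularity on the $\Ccal^2$ cylinder gives $\|\Delta v\|_{\fsL^p((0,T)\times\Omega)}\ioe C_p\|h\|_{\fsL^p((0,T)\times\Omega)}$ for every $p\in(1,\infty)$; by Riesz--Thorin the best constant $p\mapsto C_p$ is continuous, and $C_2\ioe1$ because multiplying $-\partial_t v-\Delta v=h$ by $-\Delta v$ and integrating over $(0,T)\times\Omega$ yields $\tfrac12\|\nabla v(0,\cdot)\|_{\fsL^2}^2+\|\Delta v\|_{\fsL^2}^2=-\int_0^T\!\!\int_\Omega h\,\Delta v$. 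For general $\mu$ I rescale: with $M_\phi$ denoting multiplication by $\phi$ and $K\colon h\mapsto\Delta v$ the heat solution operator, dividing by $\mu_+$ and treating the lower-order coefficient as a perturbation gives $-\partial_t w-\Delta w=\mu_+^{-1}g+(\mu_+^{-1}\mu-1)\Delta w$, hence $\bigl(I-K\,M_{\mu_+^{-1}\mu-1}\bigr)\Delta w=\mu_+^{-1}K g$. Since $\|\mu_+^{-1}\mu-1\|_{\fsL^\infty}\ioe1-\mu_-/\mu_+<1$ and $C_2\ioe1<\mu_+/(\mu_+-\mu_-)$, continuity of $p\mapsto C_p$ provides an explicit neighbourhood of $2$ --- this pins down $\delta$ --- on which $C_{r'}\,(1-\mu_-/\mu_+)<1$; there $I-K\,M_{\mu_+^{-1}\mu-1}$ is invertible on $\fsL^{r'}((0,T)\times\Omega)$ by a Neumann series, and the claimed bound follows with constant $\mu_+^{-1}C_{r'}/\bigl(1-C_{r'}(1-\mu_-/\mu_+)\bigr)$, depending only on $\mu_-,\mu_+,d,\Omega,T$.

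Granted the dual estimate, the duality argument is short. Fix $g$ with $\|g\|_{\fsL^{r'}((0,T)\times\Omega)}\ioe1$ and let $w$ be the corresponding dual solution. Since $u=0=w$ on $(0,T)\times\partial\Omega$ and $w(T,\cdot)=0$, multiplying $\partial_t u-\Delta(\mu u)=f$ by $w$ and integrating by parts (twice in space, once in time) gives the identity $\int_0^T\!\!\int_\Omega u\,g=\int_0^T\!\!\int_\Omega f\,w+\int_\Omega u^{\init}\,w(0,\cdot)$. I bound the first term by Hölder, $\bigl|\int f w\bigr|\ioe\|f\|_{\fsL^p((0,T];\fsL^q(\Omega))}\,\|w\|_{\fsL^{p'}((0,T);\fsL^{q'}(\Omega))}$, and then $\|w\|_{\fsL^{p'}_t\fsL^{q'}_x}$ by the anisotropic parabolic Sobolev embedding: from $\partial_t w,\nabla^2 w\in\fsL^{r'}((0,T)\times\Omega)$ one obtains $w\in\fsL^{p'}((0,T);\fsL^{q'}(\Omega))$ as soon as $\tfrac2{p'}+\tfrac d{q'}\soe\tfrac{d+2}{r'}-2$, and a direct rewriting shows this is equivalent to $\tfrac{d+2}{2+\delta}\soe\tfrac2p+\tfrac dq-2$, i.e.\ precisely the hypothesis. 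The second term is bounded by $\|u^{\init}\|_{\fsL^r(\Omega)}\,\|w(0,\cdot)\|_{\fsL^{r'}(\Omega)}\lesssim\|u^{\init}\|_{\fsL^{2+\delta}(\Omega)}$ using the bound on $\sup_t\|w(t,\cdot)\|_{\fsL^{r'}}$ from the first paragraph. Adding up and taking the supremum over admissible $g$ yields $\|u\|_{\fsL^{2+\delta}([0,T]\times\Omega)}\lesssim\|u^{\init}\|_{\fsL^{2+\delta}(\Omega)}+\|f\|_{\fsL^p((0,T];\fsL^q(\Omega))}$.

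Finally, since $\mu$ is only bounded measurable, the dual problem and the integrations by parts will be made rigorous by approximating $\mu$ by smooth coefficients with the same bounds $\mu_-,\mu_+$, applying the above to the smooth problems, and passing to the limit --- all constants above depend on $\mu$ only through $\mu_-$ and $\mu_+$, exactly as in the standard references on duality lemmas. I expect the only genuine difficulty to be the dual estimate, and within it the quantitative perturbation off the heat equation: one must exploit that the heat $\fsL^p$-maximal-regularity constant is at most $1$ at $p=2$ and is continuous in $p$, so that both the admissible range of $\delta$ and the implicit constants can be controlled by $\mu_-/\mu_+$, $d$, $\Omega$ and $T$ alone; the energy identity, the Sobolev bookkeeping and the approximation step are routine.
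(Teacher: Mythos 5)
Your proposal is correct, but it takes a genuinely different route from the paper: you run the classical duality method (Pierre, Ca\~nizo--Desvillettes--Fellner) transposed to Dirichlet data, whereas the paper argues directly on the primal side. You estimate the backward dual problem $-\partial_t w-\mu\Delta w=g$, $w(T,\cdot)=0$, proving $\|\Delta w\|_{\fsL^{r'}}\lesssim\|g\|_{\fsL^{r'}}$ for $r'=(2+\delta)'$ by a Neumann series resting on two facts: the $\fsL^2$ maximal-regularity constant of the Dirichlet heat operator is $\ioe 1$ (your energy identity, which is correct) and the constant is finite near $p=2$ with $\limsup_{p\to 2}C_p\ioe 1$ by Riesz--Thorin; you then dualize, and handle $f\in\fsL^p_t\fsL^q_x$ through the parabolic Sobolev embedding for the dual solution --- your exponent bookkeeping (the condition $\frac{2}{p'}+\frac{d}{q'}\soe\frac{d+2}{r'}-2$ being exactly the stated hypothesis $\frac{2+d}{2+\delta}\soe\frac2p+\frac dq-2$) checks out. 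The paper instead writes $u$ itself by Duhamel with the Dirichlet kernel, $u=\Gamma_{\Omega,D}*_{t,x}\Delta[(\mu-1)u]+\Gamma_{\Omega,D}*_{t,x}f+\Gamma_{\Omega,D}(t,\cdot)*_x u^{\init}$ after rescaling time so that $\lambda\ioe\mu\ioe 2-\lambda$, shows that $h\mapsto\Gamma_{\Omega,D}*_{t,x}\Delta h$ has norm $\ioe 1$ on $\fsL^2$ via the eigenfunction expansion of the Dirichlet Laplacian and is bounded on $\fsL^3$ by maximal regularity, interpolates to absorb the $(\mu-1)$ term in $\fsL^{2+\delta}$, and bounds the $f$- and $u^{\init}$-contributions by heat-kernel estimates. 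The two proofs share the same core (operator norm $\ioe 1$ at exponent $2$ plus interpolation to a nearby exponent, so that the perturbation $(\mu-1)\Delta$ is a contraction); yours buys the familiar architecture of the duality literature and makes the role of the $(p,q)$-condition transparent, while the paper's avoids introducing the dual problem, the integration by parts against a merely bounded measurable $\mu$, and part of the approximation machinery needed to justify them.

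Two points to tighten. First, dividing the dual equation by $\mu_+$ also divides $\partial_t w$; you should state the time rescaling $t\mapsto \mu_+ t$ explicitly before writing $-\partial_t w-\Delta w=\mu_+^{-1}g+(\mu_+^{-1}\mu-1)\Delta w$ (this is the same normalization the paper performs to reach $\lambda\ioe\mu\ioe 2-\lambda$). Second, the representation $\|u\|_{\fsL^{2+\delta}}=\sup_g\int_0^T\!\!\int_\Omega u\,g$ over $\|g\|_{\fsL^{r'}}\ioe 1$ presupposes $u\in\fsL^{2+\delta}$, so the approximation step you sketch at the end is genuinely needed there as well, not only for the integrations by parts.
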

\begin{proof}
  By rescaling time, we can assume that \(\lambda \le \mu \le 2-\lambda\) for \(\lambda > 0\).  We can
  then rewrite the equation as
  \begin{equation*}
    (\partial_t - \Delta) u = \Delta[(\mu-1)u] + f .
  \end{equation*}
  Recalling the fundamental solution \(\Gamma_{\Omega,D}\) of the heat equation with Dirichlet boundary conditions, the unknown $u$  is determined by
  \begin{equation}\label{eq:u-duality}
    u = \Gamma_{\Omega,D} *_{t,x} \Delta[(\mu-1)u]
    + \Gamma_{\Omega,D} *_{t,x} f
    + \Gamma_{\Omega,D}(t,\cdot) *_x u^{\init}.
  \end{equation}

  By maximal regularity, we find a constant \(C_3 > 0\) such that
  \begin{align*}
    \lVert \Gamma_{\Omega,D} *_{t,x} \Delta[(\mu-1)u] \rVert_{\fsL^3((0,T]\times \Omega)}
    &\le C_3 \,
    \lVert (\mu-1)u \rVert_{\fsL^3((0,T]\times \Omega)}\\
    &\le C_3 \, (1-\lambda)
    \lVert u \rVert_{\fsL^3((0,T]\times \Omega)}.
  \end{align*}
  Moreover, the Laplace operator with Dirichlet boundary conditions has an
  associated orthonormal basis of \(\fsL^2(\Omega)\) consisting of eigenvectors.
  Using this basis, we can verify that
  \begin{align*}
    \lVert \Gamma_{\Omega,D} *_{t,x} \Delta[(\mu-1)u] \rVert_{\fsL^2((0,T]\times \Omega)}
    &\le
    \lVert (\mu-1)u \rVert_{\fsL^2((0,T]\times \Omega)}\\
    &\le (1-\lambda) \,
    \lVert u \rVert_{\fsL^2((0,T]\times \Omega)}.
  \end{align*}
  Hence, by interpolation, we can find some \(\delta>0\) such that
  \begin{equation*}
    \lVert \Gamma_{\Omega,D} *_{t,x} \Delta[(\mu-1)u] \rVert_{\fsL^{2+\delta}((0,T]\times \Omega)}
    \le (1-\lambda/2) \,
    \lVert u \rVert_{\fsL^{2+\delta}((0,T]\times \Omega)}.
  \end{equation*}
  As the factor appearing in the right-hand side of this inequality is less than
  \(1\), we can thus absorb the corresponding term in
  \eqref{eq:u-duality}. 
{Classical properties of the heat
    kernel, see, e.g., \cite[Lemma~11]{boutondesvillettes2025}, allow to bound
    the two last terms in the right-hand side of \eqref{eq:u-duality} and to
    obtain the claimed result.}
\end{proof}

\section{Acknowledgements}

The results contained in the present paper have been partially
presented in WASCOM 2025.

On behalf of all authors, the corresponding author states that there
is no conflict of interest.

\printbibliography
\end{document}